\theoremstyle{plain}
\newtheorem{corollary}{Corollary}[section]
\newtheorem{thm}{Theorem}
\newtheorem{lemma}{Lemma}[section]
\newtheorem{question}{Question}
\theoremstyle{definition}
\newtheorem{prop}{Proposition}
\newtheorem{definition}{Definition}[section]
\newtheorem{remark}{Remark}[section]
\newtheorem{remarks}{Remarks}[section]
\newcommand{\R}{\mathbb{R}}
\newcommand{\Z}{\mathbb{Z}}
\newcommand{\N}{\mathbb{N}}
\newcommand{\sg}{\Sigma}
\newcommand{\gm}{\Gamma}
\title{\sc{Intersection Norms and One-faced Collections of Curves}}
\author{Abdoul Karim SANE}
\date{ ENS Lyon, October 15th 2018.}
\begin{document}
\renewcommand{\proofname}{Proof}
\renewcommand{\refname}{Bibliography}
\renewcommand{\abstractname}{Abstract}
\maketitle

\begin{abstract} Intersection norms are integer norms on the first homology group of a surface. In this article, we prove that there are some polytopes which are not dual unit balls of such norms. By the way, we investigate the set of collections of curves on $\sg_2$ whose complement is a disk.\footnote{\scriptsize{Research supported by the laboratory UMPA-ENS LYON}} 
\end{abstract}
\begin{section}{Introduction}
Intersection norms on surfaces were first quickly introduced by Turaev ~\cite{turaev} (page 143), and  studied by M. Cossarini and P. Dehornoy ~\cite{Direc}. They use intersection norms to classify up to isotopy all surfaces transverse to the geodesic flow on the complement of special links in the unit tangent bundle of a closed oriented surface. \\
Their result makes explicit Thurston's fibered faced theory for Thurston norms on compact oriented 3-manifolds. It tells us that an intersection norm on a surface (respectively the Thurston norm on a 3-manifold) encodes the open book decompositions of the unit tangent bundle of that surface (respectively the topology of a fibered ~3-manifold ).

Our purpose in this article is to study intersection norms for their own.\\
Let $\sg_g$ be a closed oriented surface of genus $g\geq 1$, and $\gm$ a collection of closed curves on ~$\sg_g$. We assume that $\gm$ has only double intersection points. Let $\alpha$ be loop on $\Sigma_g$, we define the number $i_{\gm}(\alpha)$ as follows:
$$i_{\gm}(\alpha)=\inf\{\#\{\alpha'\cap\gm\}; \alpha'\sim\alpha; \alpha'\pitchfork\gm\};$$
where the symbol $\sim$ (respectively $\pitchfork$) is the free homotopy relation (respectively transversality).\vspace{0,5cm}

We define
\begin{align*}
N_{\Gamma}:H_1(\sg_g,\mathbb{R})&\longrightarrow \mathbb{R}\\
                     a&\longmapsto \inf\{i_{\gm}(\alpha);[\alpha]=a\}.
\end{align*} 
The function $N_{\gm}$ defines a semi-norm on $H_1(\Sigma_g,\mathbb{R})$ and it takes integer values on the lattice $H_1(\Sigma_g,\mathbb{Z})$.  Using a standard basis for the homology, we shall identify ~$H_1(\sg_g,\R)$ and ~$H^1(\sg_g,\R)$ with $\R^{2g}$.  By a theorem of Thurston \cite{thurst}, the dual unit  ball of $N_{\gm}$ is a lattice polytope, ie, the convex hull of finitely many integer vectors (by integer vector, we mean a vector in the integer lattice ~$H^1(\sg_g,\Z)$).\\
Moreover, if $\gm$ fills $\sg_g$ that is $\Sigma_g-\gm$ is a union of topological disks, then ~$N_{\Gamma}$ defines a norm, i.e, its dual unit  ball has non empty interior in ~$H^1(\sg_g,\R)$. 
 
 One constraint on the dual unit  balls of intersection norms is that their vertices are congruent modulo 2. This comes from the fact that geometric and  algebraic intersections have the same parity. In genus $1$, this constraint happens to be the only one. So, every symmetric convex lattice polygon with $\rm{mod} \hspace{0,1cm}2$ congruent vertices is the dual unit ball of an intersection norm on the torus. The proof of this fact  follows from an implicit argument in Thurston's paper ~\cite{thurst}. We will explain it in Section~\ref{sect1} for completeness.
 
 Now we raise the following problem\string: 
 
 \begin{question} Fix $g\geq 2$, and let $P\subset H^1(\sg_g,\R)$ be a symmetric lattice polytope all of whose vertices are congruent mod 2. Is it the dual unit ball of some intersection norm on $\sg_g$?
 \end{question} 
 This question is natural when we deal with integer norms coming from topology (for instance, we have an analogue of this question for the Thurston's norm).
 
 In this article, we give examples of lattice polytopes on $\R^4$ with $\rm{mod}\hspace{0,1cm}2$ congruent vertices,  which are not dual unit  balls of intersection norms. More precisely, we show that sub-polytopes --with eight vertices and non-empty interior-- of the cube $[-1,1]^4$, are not the dual unit balls of intersection norms. It means that in higher dimension, dual unit balls of intersection norms come with other constraints.

 Let $\mathcal{P}_8$ be the set of all symmetric sub-polytopes of $[-1,1]^4$ having eight vertices and non-empty interior. The set $\mathcal{P}_8$ is not empty; it contains the polytope generated by the following vectors (and their opposites)\string:
$$v_1=(1,1,1,1),\hspace{0.2cm }v_2=(1,-1,1,1),\hspace{0.2cm } v_3=(-1,1,1,1),\hspace{0.2cm } v_4=(1,1,-1,1).$$

Now, we state the main result of this article\string:
\begin{thm}\label{thmprinc}
Elements of $\mathcal{P}_8 $  are not dual unit  balls of intersection norms.
\end{thm}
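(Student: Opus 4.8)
The plan is to give a purely combinatorial description of the dual unit ball $B^*$ of $N_\gm$ and then obstruct it by counting vertices. First I would endow $\sg_2$ with the CW structure induced by $\gm$: the $0$-cells are the double points of $\gm$, the $1$-cells are the arcs of $\gm$ between consecutive double points, and the $2$-cells are the components of $\sg_2-\gm$. Writing $C^\bullet$ for the associated cochain complex, $Z^1\subset C^1$ for the cocycles, and $\pi:Z^1\twoheadrightarrow H^1(\sg_2,\R)$ for the quotient by coboundaries, I claim
\[
B^*=\pi\bigl(\{\,z\in Z^1:\ \|z\|_\infty\le 1\,\}\bigr).
\]
The inclusion $\supseteq$ is elementary: if $\alpha\pitchfork\gm$ crosses the $1$-cells $e_{i_1},\dots,e_{i_m}$ with signs $\sigma_j$, then for a cocycle $z$ with $\|z\|_\infty\le 1$ one has $\langle\pi(z),[\alpha]\rangle=\sum_j\sigma_j\,z(e_{i_j})\le m=\#\{\alpha\cap\gm\}$, so $\pi(z)$ lies in $B^*$. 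The reverse inclusion is the heart of the structural lemma and should follow from linear-programming duality (max-flow/min-cut on the $1$-skeleton), expressing $N_\gm(a)$ as the minimal $\ell^1$-cost of an edge chain representing $a$.

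Next I would specialise to one-faced collections, which is precisely the family investigated in the paper. If $\sg_2-\gm$ is a single disk, the Euler characteristic relation $V-E+F=-2$ together with $E=2V$ forces $V=3$, $E=6$, $F=1$; moreover the boundary word of the unique $2$-cell is a product of commutators, so every $1$-cell occurs on it with total signed multiplicity zero. Hence $\delta_1=0$ and $Z^1=C^1=\R^6$, which gives
\[
B^*=\pi\bigl([-1,1]^6\bigr)=\sum_{i=1}^{6}\bigl[-\pi(e_i),\pi(e_i)\bigr],
\]
the \emph{zonotope} generated by the six projected edges $\pi(e_i)\in H^1(\sg_2,\R)\cong\R^4$.

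The obstruction is then a vertex count. A full-dimensional zonotope in $\R^4$ is a Minkowski sum of segments spanning $\R^4$, so it has at least $2^4=16$ vertices, with equality only for a parallelepiped. Every $P\in\mathcal P_8$ has exactly eight vertices and non-empty interior, hence is not a zonotope; therefore no one-faced collection has $P$ as the dual unit ball of its intersection norm.

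The main obstacle is to remove the one-faced hypothesis. When $\sg_2-\gm$ has $F\ge 2$ faces one only obtains $B^*=\pi\bigl([-1,1]^E\cap Z^1\bigr)$, the projection of a proper cube \emph{slice}, which need not be a zonotope and could a priori have eight vertices. I would handle this by a minimality argument: assuming some $\gm$ realises a $P\in\mathcal P_8$, choose one with the fewest double points and show that the extra complementary faces can be eliminated—by resolving or discarding inessential crossings—without changing $B^*$, thereby reducing to the one-faced case and contradicting the vertex count. Making this reduction precise, i.e. controlling how the face structure of $\gm$ constrains the number of vertices of $B^*$, is exactly where the study of one-faced collections on $\sg_2$ must carry the argument.
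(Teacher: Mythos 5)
Your overall strategy (reduce to one-faced collections, then obstruct their dual balls) parallels the paper, but the structural claim your obstruction rests on is false. The pairing $\langle\pi(z),[\alpha]\rangle=\sum_j\sigma_j\,z(e_{i_j})$ of an edge-weight $z$ with a transverse curve $\alpha$ descends to homology only if $z$ is a cocycle for the \emph{dual} cell structure, i.e.\ satisfies a linear relation at each of the three double points of $\gm$ (this is precisely the Eulerian condition of Cossarini--Dehornoy, Theorem 2.2 of the paper); the condition you impose instead, $\delta_1 z=0$ on the unique $2$-cell, is the cocycle condition in the wrong complex. It is indeed vacuous for a one-faced collection, but it has no bearing on the pairing you wrote, so your ``elementary inclusion'' already fails for a general $z\in[-1,1]^6$. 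In the correct complex the coboundaries vanish (one face) and the three vertex relations have rank $2$, so $B^*$ is the $4$-dimensional \emph{slice} $[-1,1]^6\cap Z^1_{\mathrm{dual}}\cong\R^4$ of the cube, not its projection, and in particular it need not be a zonotope. The paper's explicit computations refute the zonotope claim outright: three of the four one-faced collections with dual ball in $[-1,1]^4$ have dual balls with exactly $10$ vertices, whereas a full-dimensional zonotope in $\R^4$ has at least $2^4=16$ vertices (your own correct observation). So there is no uniform vertex-count obstruction, and one genuinely needs what the paper does: classify the one-faced collections on $\sg_2$ with dual ball in $[-1,1]^4$ up to the mapping class group (Theorem 2: four orbits) and compute each dual ball, finding $16$, $10$, $10$, $10$ vertices --- never $8$.

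The reduction you defer to the end is real but is the smaller gap: the paper handles it with the smoothing operation and the lower-bound lemma (every odd intersection norm, in particular any with dual ball in $[-1,1]^{2g}$, dominates the norm of a one-faced collection obtained by smoothings, whose dual ball is a sub-polytope of the original with non-empty interior). What cannot be salvaged is the middle of your argument; repairing it would force you back into the case analysis of Section 3.
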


If $\gm$ is a filling collection of curves on a surface, whose complement is a disk, we say that $\gm$ is a \textbf{\textit{one-faced collection}}. 

They have been called \textbf{minimally intersecting filling collections} in \cite{Aoug} and also \textbf{unicellular maps} in \cite{Chap}. \vspace{0,5cm} 

The proof of Theorem \ref{thmprinc} relies on\string: 

\begin{thm}\label{counting}
On a closed genus $2$ surface, there are  four orbits of one-faced collections whose dual unit balls are in the cube $[-1,1]^4$, under the mapping class group action.  
\end{thm}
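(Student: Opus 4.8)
The plan is to turn the statement into a finite combinatorial enumeration and then cut that list down using the homological meaning of the cube condition. First I would fix the combinatorial model. If $\gm$ is a one-faced collection on $\sg_2$, the union of its curves is a $4$-valent graph $G$ embedded in $\sg_2$ (each double point is $4$-valent), with complementary region a single disk. Writing $V,E,F$ for the numbers of double points, edges and faces, one-facedness gives $F=1$, $4$-valence gives $2E=4V$, and $\chi(\sg_2)=V-E+F=-2$. Solving, every one-faced collection on $\sg_2$ has exactly $V=3$, $E=6$ and bounds a single $12$-gon. Hence such collections are exactly the unicellular $4$-valent ribbon graphs with three vertices, equivalently the side-pairings of a $12$-gon yielding a connected genus-$2$ surface. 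Since two filling collections lie in the same mapping class group orbit precisely when some orientation-preserving homeomorphism of $\sg_2$ carries one cell decomposition to the other, the orbits I must count are the isomorphism classes of these oriented ribbon graphs, which already form a finite list.

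Next I would use duality to read off the cube constraint (up to the choice of symplectic basis, which the mapping class group action lets me normalize). By Thurston's argument \cite{thurst} the dual ball $B^{\ast}_{\gm}$ is a symmetric lattice polytope and $N_{\gm}$ is its support function, so $N_{\gm}(a)=\max_k|\langle w_k,a\rangle|$ over the vertices $w_k$. In the fixed symplectic basis identifying $H_1(\sg_2,\R)$ with $\R^4$, the cube $[-1,1]^4$ is the unit ball of the sup-norm on $H^1(\sg_2,\R)$, dual to the $\ell^1$-norm $\|\cdot\|_1$ on $H_1(\sg_2,\R)$; thus $B^{\ast}_{\gm}\subset[-1,1]^4$ is equivalent to $N_{\gm}(a)\le\|a\|_1$ for all $a$. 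On the other hand, since total geometric intersection dominates algebraic intersection componentwise, $N_{\gm}(a)\ge\sum_i|\langle c_i,a\rangle|$, where $c_i$ is the class of the $i$-th component. Combining the two inequalities forces the zonotope $\sum_i[-c_i,c_i]$ into the cube, i.e.\ $\sum_i|(c_i)_j|\le 1$ for each coordinate $j$. Because a filling collection has $\{c_i\}$ of rank $4$, each coordinate must be carried by exactly one component with a $\pm 1$ entry: the nonzero component classes have pairwise disjoint supports and partition the four symplectic coordinates (extra null-homologous components being allowed). This reduces the admissible homological types to the few partitions of the four coordinates, taken up to $Sp(4,\Z)$.

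I would then cross the two lists: run through the finitely many three-vertex unicellular ribbon graphs, compute for each the component classes $c_i$, and retain only those realizing one of the admissible partition types; for the survivors I would compute $B^{\ast}_{\gm}$ explicitly, not merely its zonotope lower bound, since a single self-intersecting component already yields a full-dimensional ball while its zonotope is degenerate. Finally I would organize the survivors into mapping class group orbits, separating them by the $Sp(4,\Z)$-type of the family $\{c_i\}$ (the number of components, and whether the supporting blocks pair symplectically or not) together with the ribbon-graph isomorphism type, and verify that exactly four classes remain.

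The main difficulty is twofold. First, $N_{\gm}$ must be computed from genuine minimal geometric intersection numbers rather than from homology, since algebraic intersection is only a lower bound; pinning down the true vertices of $B^{\ast}_{\gm}$ for candidates with few components, where the zonotope is degenerate, is the real computation. Second, one must certify that the list of four orbits is both complete and irredundant, that is, every cube-compatible one-faced collection is equivalent to one of the four and no two of the four are related by a homeomorphism of $\sg_2$. I expect this completeness and irredundancy bookkeeping, namely controlling the $12$-gon gluings up to symmetry and matching them with the $Sp(4,\Z)$-classes of the component data, to be the most delicate step.
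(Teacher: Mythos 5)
Your reduction to a finite problem is sound in outline and genuinely different from the paper's. You propose to enumerate all $4$-valent unicellular ribbon graphs with three vertices on $\sg_2$ (correctly deduced from $\chi=V-E+F$ with $F=1$ and $2E=4V$) and then filter by the cube condition read homologically. The paper instead uses the cube condition first to normalize the collection: since $B_{N^*_{\gm}}\subset[-1,1]^4$ forces $N_{\gm}$ to take the value $1$ on a symplectic basis, one can arrange by a diffeomorphism that each basis curve meets $\gm$ exactly once, so that $\gm$ consists of four fixed dual arcs completed by arcs in an annulus around $\eta=\alpha_1\beta_1\alpha_1^{-1}\beta_1^{-1}$; the classification then reduces to solving small equations in twisting exponents subject to total self-intersection $3$, with explicit diffeomorphisms ($R_1$, $R_2$, interchanging, Dehn twists along $\eta$) identifying orbits. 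The paper's normalization does most of the combinatorial work before any enumeration, while your route defers it all to the ribbon-graph list.

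There are two concrete gaps. First, the claim that a filling collection has component classes $\{c_i\}$ of rank $4$ is false: the one-faced collection consisting of a single curve (which does occur in the final list, as $a_1a_2^{-1}b_1^{-1}b_2\eta$) has rank one, and $\{a_1,a_2,b_1b_2^{-1}\}$ has rank three; filling constrains the complement, not the homological span. Consequently your conclusion that the supports of the symplectic duals of the $c_i$ must \emph{partition} the four coordinates is unjustified; only pairwise disjointness, with entries in $\{0,\pm1\}$, follows from the zonotope bound, and using the partition condition as a filter could in principle discard admissible collections, so completeness of the resulting list is not established. Second, and more seriously, the proposal never carries out the enumeration that constitutes the content of the theorem: the list of three-vertex $4$-valent unicellular ribbon graphs is not produced, the dual unit balls of the survivors are not computed (the paper does this via Eulerian co-orientations), and neither the completeness nor the irredundancy of the four orbits is verified --- you yourself flag this as the most delicate step. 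As it stands this is a plan for a proof, not a proof; the number four is never actually derived.
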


The proof of Theorem $1$  uses the natural (partial) order on the set of functions. We relate that partial order to a topological operation on collections of closed curves and we use it to show that if an element of $\mathcal{P}_8$ is the dual unit ball of an intersection norm, then it must come from a one-faced collection ~$\gm$. Finally, we check that none of the four collections of Theorem~\ref{counting} realizes an element of $\mathcal{P}_8$.
\begin{paragraph}{Organization of this article\string:} In Section ~\ref{sect1}, we recall some facts on intersection norms and we show that for the question of realizability, we can restrict our attention to minimal collections. 

In Section ~\ref{sect2}, we show that any intersection norm is bounded from below by a norm defined by a one-faced collection. 

Finally, in Section ~\ref{sect3}, we count orbits (under the mapping class group action) of one-faced collections (whose dual unit balls are sub-polytopes of the cube ~$[-1,1]^4$) on $\sg_2$ and we prove Theorem \ref{thmprinc}.   
\end{paragraph}

\begin{paragraph}{Acknowledgments\string:}
I am very thankful to my two supervisors P. Dehornoy and J.-C Sikorav for careful reading and discussion at every step of the writing of this article. 
\end{paragraph}
\end{section}
\begin{section}{Preliminaries on intersection norms}\label{sect1}

In this section, we first recall some facts about integer (semi)-norm. Then, we define the intersection (semi)-norm define by a collection of curves and we recall some basic notions about them (For more details on intersection norms, see \cite{Direc}). We end this section by proving that, concerning the realizability of polytopes, we can restrict our attention to so-called minimal collections. 

Let $\sim$ denote the free homotopy relation on curves, $\pitchfork$ be transversality relation and $[.]$ the homology class. \vspace{0.2cm}

\begin{paragraph}{Integer norms\string:}
Let $E$ be a vector space of dimension $n$ and $$L=L(u_1,...,u_n):=\{a_1u_1+...+a_nu_n, a_i\in\Z\}$$  the lattice generate by the vectors $(u_i)_{i=1,...,n}$. 
\begin{definition}[\textit{Integer norm}]\leavevmode

A norm $N:E\longrightarrow \R_+$ is an \textit{\textbf{integer semi-norm}} relatively to the lattice ~$L$ if the restriction of $N$ to $L$ takes positive integer values. 
\end{definition}

The following theorem states that the dual unit ball of an integer-norm have a combinatorial description. 
\begin{thm}[W.Thurston]\label{thmthurst}
If $N$ is an integer semi-norm relatively to a lattice ~$L$, then its dual unit  ball is a convex hull of finitely many vectors in the lattice; $$B_{N^*}=ConvHull\{v_1,......, v_n; v_i\in L\}.$$
\end{thm}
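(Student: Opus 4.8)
The plan is to describe $B_{N^*}\subset E^*$ explicitly and to show it is the convex hull of the finitely many dual-lattice points it contains. First I would reduce to the case where $N$ is a genuine norm: the kernel $\ker N$ is a rational subspace, and $N$ descends to an integral norm on the quotient $E/\ker N$ equipped with the image lattice, whose dual ball coincides with $B_{N^*}$ (sitting inside the annihilator of $\ker N$). Next I would unwind the dual norm, writing
$$B_{N^*}=\{\phi\in E^*:\ \phi(x)\le N(x)\ \text{for all }x\in E\}.$$
Since $N$ is positively homogeneous and continuous and the rational directions are dense, this set is unchanged if the condition is imposed only on lattice vectors: for $\phi(a)\le N(a)$ on all $a\in L$ gives $\phi(a/m)\le N(a/m)$, and passing to limits yields $\phi(x)\le N(x)$ for every real $x$. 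Thus $B_{N^*}=\bigcap_{a\in L\setminus\{0\}}\{\phi:\phi(a)\le N(a)\}$, an intersection of half-spaces with integral normals $a\in L$ and integral offsets $N(a)\in\Z$. As $N$ is a norm, $N^*$ is a norm and $B_{N^*}$ is compact; in particular the set $V:=B_{N^*}\cap L^*$ of integral functionals in the dual ball is finite.

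The core of the proof is the support-function identity $N(x)=\max_{\phi\in V}\phi(x)$, which is exactly equivalent to $B_{N^*}=\mathrm{ConvHull}(V)$ (a closed convex symmetric set equals the intersection of the half-spaces cut out by its support function). The inequality $\max_{\phi\in V}\phi(x)\le N(x)$ is immediate from $V\subset B_{N^*}$. For the reverse, homogeneity and continuity let me restrict to $x=a\in L$, where the claim becomes: the supporting face $\{\phi\in B_{N^*}:\phi(a)=N(a)\}$ contains a point of $L^*$. A clean integrality input feeds this: if $N$ coincides with a linear functional $\phi$ on a full-dimensional cone, then $\phi\in L^*$, because choosing a lattice point $w$ in the cone, the points $mw$ and $mw+u_i$ both lie in the open cone for $m$ large, whence $\phi(u_i)=N(mw+u_i)-N(mw)\in\Z$ for each generator $u_i$ of $L$. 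Consequently every linear slope of $N$ already belongs to the finite set $V$.

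The main obstacle is precisely the lattice-point-in-the-face claim, since for an arbitrary rational polytope a supporting hyperplane in an integral direction need not meet the lattice; here the integrality hypothesis must be used in an essential way. I would handle it by induction on $\dim E$. The face of $B_{N^*}$ in the direction $a$ lies in the affine hyperplane $\{\phi:\phi(a)=N(a)\}$, whose linear part consists of the functionals vanishing on $a$; identifying these with $(E/\langle a\rangle)^*$ and pushing $L$ forward to the rank-$(n-1)$ lattice in $E/\langle a\rangle$, the face is itself the dual unit ball of an integral seminorm of strictly smaller rank. The inductive hypothesis then provides an integral vertex of that face, which is the required point of $L^*$. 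Granting this, $N=\max_{\phi\in V}\phi$ holds on $L$, hence on all of $E$ by homogeneity and continuity, and therefore $B_{N^*}=\mathrm{ConvHull}(V)$ is the convex hull of the finite set $V\subset L^*$, a lattice polytope as claimed.
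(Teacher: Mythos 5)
The paper itself does not prove this theorem: it is quoted from Thurston, with references to \cite{thurst}, to Fried's expos\'e in \cite{flp}, and to \cite{lassal} for complete proofs. So your attempt must be judged against those. Your overall skeleton is the right one and matches Thurston's strategy: reduce to showing that for every $a\in L$ the supporting face $F_a=\{\phi\in B_{N^*}:\phi(a)=N(a)\}$ meets $L^*$, and use the (correct, and genuinely Thurstonian) observation that a linear functional agreeing with $N$ on an open cone containing a lattice point is automatically integral, since $\phi(u_i)=N(mw+u_i)-N(mw)\in\Z$ for large $m$.

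The gap is in your inductive step. You assert that $F_a$, viewed in the hyperplane $\{\phi:\phi(a)=N(a)\}$, ``is itself the dual unit ball of an integral seminorm'' on $E/\langle a\rangle$. This is false: a dual unit ball is centrally symmetric about the origin, whereas a face of $B_{N^*}$ has no canonical center and is in general not symmetric about any lattice point. For example, for $N(x)=\max(|x_1|,|x_2|,|x_1+x_2|)$ on $\R^2$ (integer-valued on $\Z^2$, dual ball the hexagon with vertices $\pm(1,0),\pm(0,1),\pm(1,1)$), the face in the direction $a=(1,0)$ is the segment from $(1,0)$ to $(1,1)$, symmetric only about the non-lattice point $(1,\tfrac12)$. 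Note also that the natural quotient seminorm $\bar N(\bar y)=\inf_t N(y+ta)$ has dual ball equal to the central slice $B_{N^*}\cap a^{\perp}$, not the face, so there is no obvious seminorm to which your induction hypothesis applies. The standard repair is to replace the quotient seminorm by the one-sided tangent function $P(y)=\lim_{k\to\infty}\bigl(N(ka+y)-kN(a)\bigr)$, which satisfies $F_a=\{\phi:\phi\le P\}$; its integrality on $L$ follows because $k\mapsto N(ka+y)-kN(a)$ is a non-increasing, bounded-below sequence of integers, hence stabilizes. But $P$ is a non-symmetric sublinear function, so the induction hypothesis must be strengthened from ``integral seminorms'' to ``integer-valued sublinear functions'' for the argument to close. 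As written, your induction does not apply to the object you produce, and this is exactly the delicate point that makes the theorem nontrivial (and why \cite{lassal} was worth publishing); the rest of your outline (density of rational directions, compactness and finiteness of $V$, the support-function equivalence) is fine.
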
\vspace{0.5cm}
One can find a sketch of proof of Theorem~\ref{thmthurst} in \cite{thurst}  (Page 107-112). For a more complete proof, see \cite{flp} (ExposÈ Fourteen by David Fried). More recently, de la Salle gives a new proof of Theorem~\ref{thmthurst} (see \cite{lassal}).  
\end{paragraph}

\begin{paragraph}{Definition of intersection norms:}
We consider a genus $g$ closed oriented surface $\sg_g$ and a collection $\gm=\{\gamma_1,...,\gamma_n\}$   of closed curves on $\sg_g$. We insist on the fact that $\gm$ is fixed up to isotopy. Let ~$a\in H_1(\sg_g,\Z)$ be a homology class and $\alpha$  an oriented multi-curves representing $a$. Then we define:

$$i_{\gm}(\alpha):=\inf\{\#\{\alpha'\cap\gm\}; \alpha'\sim\alpha; \alpha'\pitchfork\gm\}$$

and

$$ N_{\gm}(a):=\inf\{i_{\gm}(\alpha); [\alpha]=a\}.$$

If a multi-curves $\alpha$ representing a homology class $a$ is such that $$N_{\gm}(a)=i_{\gm}(\alpha),$$ then $\alpha$ is $\gm$-\textbf{\textit{minimizing}}.

One important thing is that $\gm$-minimizing multi-curves can be chosen to be simple. In fact, if $\alpha$ is a (a priori non simple) $\gm$-minimizing multi-curve then by smoothing all the self-intersection points of $\alpha$ with respect to its orientation, we get a new oriented multi-curve $\alpha'$ in the same homology class as $\alpha$ and $i_{\gm}(\alpha')=i_{\gm}(\alpha)$. It implies that $\alpha'$ is a simple $\gm$-minimizing multi-curve as we claim.  

\begin{prop}\label{deh}
The function $N_{\gm}:H_1(\sg_g,\Z)\longrightarrow \N$ satisfies:\vspace{0,2cm}
\begin{itemize}
\item \textbf{linearity on rays}: $N_{\gm}(na)=|n|N_{\gm}(a)$ for all $n\in\Z$ and $a\in H_1(\sg_g,\Z)$
\item \textbf{convexity}: $N_{\gm}(a+b)\leq N_{\gm}(a)+N_{\gm}(b)$ for all $a, b\in H_1(\sg_g,\Z).$
\end{itemize}
\end{prop}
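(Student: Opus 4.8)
The plan is to treat the two assertions separately: convexity and the ``easy half'' of homogeneity follow from elementary manipulations of multicurves, while the reverse homogeneity inequality is the only substantial point. Two trivial reductions come first. Reversing the orientation of a multicurve $\alpha$ changes its class from $[\alpha]$ to $-[\alpha]$ but leaves the set $\alpha\cap\gm$ untouched, so $i_\gm$ and hence $N_\gm$ are even, $N_\gm(-a)=N_\gm(a)$; thus in the homogeneity statement it suffices to treat $n\ge 0$.

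For convexity, fix $a,b$ and choose transverse representatives $\alpha\pitchfork\gm$ and $\beta\pitchfork\gm$ with $[\alpha]=a$, $[\beta]=b$ realizing $i_\gm(\alpha)=N_\gm(a)$ and $i_\gm(\beta)=N_\gm(b)$. After a small perturbation supported away from $\gm$, the union $\alpha\cup\beta$ is a multicurve transverse to $\gm$ in class $a+b$, and every point of $(\alpha\cup\beta)\cap\gm$ lies on exactly one of the two pieces, so $\#((\alpha\cup\beta)\cap\gm)=\#(\alpha\cap\gm)+\#(\beta\cap\gm)$. Taking the infimum gives $N_\gm(a+b)\le N_\gm(a)+N_\gm(b)$. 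Iterating this with $b=a$, and using evenness for $n<0$, yields the easy half of homogeneity, $N_\gm(na)\le |n|\,N_\gm(a)$; alternatively one takes $n$ parallel push-offs of a minimizer for $a$.

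The main obstacle is the reverse inequality $N_\gm(na)\ge |n|\,N_\gm(a)$, which genuinely uses the geometry of $\gm$ and cannot follow from convexity alone: one must rule out that a $\gm$-minimizer for $na$ splits into components that are individually cheaper on average. My approach is to pass to the dual graph of $\gm$. Put $\gm$ in minimal position with itself and regard it as the $1$-skeleton of a cell decomposition of $\sg_g$, with $0$-cells the double points, $1$-cells the arcs of $\gm$, and $2$-cells the complementary faces; let $G$ be the dual graph (one vertex per face, one edge per arc crossed) and $\rho\colon H_1(G;\Z)\to H_1(\sg_g;\Z)$ the induced map. Using the remark that a $\gm$-minimizing multicurve may be taken simple and bigon-free, one matches $\gm$-transverse multicurves with cycles of $G$ so that the number of crossings with $\gm$ equals the $\ell^1$-weight $\|c\|_1$ of the corresponding cycle; hence $N_\gm(a)=\min\{\|c\|_1 : c\in Z_1(G;\Z),\ \rho(c)=a\}$.

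The payoff is that the real relaxation $\nu(a)=\min\{\|c\|_1 : c\in Z_1(G;\R),\ \rho(c)=a\}$ is the minimum of a norm over an affine subspace as a function of the constraint value, so it is an honest seminorm on $H_1(\sg_g;\R)$ and in particular satisfies $\nu(na)=n\,\nu(a)$. The remaining, and genuinely hard, step is the integrality statement $N_\gm(a)=\nu(a)$, i.e. that this minimum-weight cycle problem admits an integral optimum: this is a minimum-cost circulation problem with integral data whose constraint matrix is totally unimodular, precisely in the spirit of Thurston's Theorem~\ref{thmthurst}. Granting it, $N_\gm(na)=\nu(na)=n\,\nu(a)=n\,N_\gm(a)$ for $n\ge 0$, and evenness finishes the proof. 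I expect this integrality/duality step to be the real content; the convexity and the upper homogeneity bound are routine.
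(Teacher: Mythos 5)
First, note that the paper does not actually prove Proposition~\ref{deh}: it explicitly states that the proof is non-trivial and defers entirely to Cossarini--Dehornoy~\cite{Direc}. So your attempt must be judged on its own merits. Your treatment of convexity and of the inequality $N_{\gm}(na)\leq |n|N_{\gm}(a)$ (union of minimizers, push-offs, evenness under orientation reversal) is routine and correct. You have also correctly located the real difficulty in the reverse inequality $N_{\gm}(na)\geq |n|N_{\gm}(a)$.

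However, there is a genuine gap precisely there. Your argument reduces everything to the claim that the integer program $\min\{\|c\|_1 : c\in Z_1(G;\Z),\ \rho(c)=a\}$ has the same optimal value as its real relaxation, and you justify this only by asserting that the constraint matrix is totally unimodular ``in the spirit of Theorem~\ref{thmthurst}''. That assertion is not substantiated and is not true as stated: the flow-conservation block (the incidence matrix of the dual graph $G$) is indeed totally unimodular, but the constraint $\rho(c)=a$ contributes additional rows encoding intersection with a homology basis of $\sg_g$, and appending such rows destroys total unimodularity in general. The integrality of this particular program is exactly the non-trivial combinatorial content of the Cossarini--Dehornoy result (it is equivalent to the min--max with Eulerian co-orientations, cf.\ Theorem~\ref{Pierre}, whose integral dual optima are the Eulerian co-orientations), so ``granting it'' amounts to granting the proposition. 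Two further, more minor, issues: the identification $N_{\gm}(a)=\min\{\|c\|_1:\rho(c)=a\}$ requires $\gm$ to be filling, since otherwise a complementary face is not a disk and the homology class of a multicurve is not determined by its dual cycle in $G$ (the proposition is stated for arbitrary collections); and the passage from a transverse multicurve to a cycle with $\|c\|_1$ equal to the geometric intersection number needs an argument that a minimizer never crosses an edge of $\gm$ in both directions. In short: the strategy (LP duality) is in the right spirit, but the decisive step is asserted rather than proved, and the one-line reason given for it would fail.
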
\vspace{0,2cm}
The proof of Proposition~\ref{deh} is not trivial and one can see ~\cite{Direc}.\vspace{0,2cm}\vspace{0,2cm}

Linearity on rays implies that $N_{\gm}$ can be extended to homology with rational coefficients since for all $a\in H_1(\sg_g,\Z)$ and $q\in\N$, we have:
$$N_{\gm}(a)=N_{\gm}(\frac{q}{q}.a)=qN_{\gm}(\frac{1}{q}a).$$

It follows by convexity that $N_{\gm}$ extends uniquely to a positive function on ~$H_1(\sg_g,\R)$. Moreover, the extended function $N_{\gm}: H_1(\sg_g,\R)\longrightarrow \R_+$ is still  linear on rays and convex. Therefore, $N_{\gm}$ defines a semi-norm on $H_1(\sg_g,\R)$ and it takes integer values on the lattice $H_1(\sg_g,\Z)$. So, $N_{\gm}$ is an \textit{\textbf{integer semi-norm}}. Theorem ~\ref{thmthurst} implies that  the  dual unit ball $B_{N^*_{\gm}}$ is a convex hull of finitely many integer vectors. \vspace{0,1cm} 

If the collection is filling, then $N_{\gm}$ defines an \textit{\textbf{integer norm}}.  
\end{paragraph}

\begin{paragraph}{Relation between the vectors of the dual unit ball\string:}
If $\alpha$ and $\beta$ are two transverse oriented closed curves , then the algebraic intersection number between $\alpha$ and $\beta$ is given by 

 $$i_a(\alpha,\beta)=\displaystyle{\sum_{p\in\alpha\cap\beta}{\varepsilon(p,\alpha,\beta)}};$$
where $\varepsilon(p, \alpha, \beta)$ is the algebraic sign of the intersection at $p$, relatively to the orientation of $\sg_g$. We recall that $i_a$ depend only on the homology classes of ~$\alpha$ and $\beta$, and defines a non degenerate skew-symmetric 2-form on $H_1(\sg_g,\R)$.\vspace{0,2cm}  

Then, if $\alpha$ and $\alpha'$ are two homologous curves, by taking an orientation of ~$\gm$, we have  

 $$i_{\gm}(\alpha)=i_a(\alpha,\gm)\hspace{0,1cm}\rm{mod}\hspace{0,1cm}2;$$  $$i_{\gm}(\alpha')=i_a(\alpha',\gm)\hspace{0,1cm}\rm{mod}\hspace{0,1cm}2;$$  $$i_a(\alpha,\gm)=i_a(\alpha',\gm).$$\vspace{0,2cm}
 
 Thus, $i_{\gm}(\alpha)=i_{\gm}(\alpha')\mod2$ for every orientation of $\gm$. Therefore, if $v_1$ and $v_2$ are two integer vertices in the dual unit sphere of $N_{\gm}$, $$v_1=v_2 \mod2.$$ 
 
The relation above is a necessary condition for a symmetric lattice polytope to be the dual unit ball of an intersection norm.  The following statement shows that it is sufficient in the genus one case. The idea of the proof is from Thurston ~\cite{thurst}.
 
 \begin{prop} If $P$ is a symmetric lattice polygon in the plane with congruent mod $2$ vertices, then $P$ is the dual unit ball of an intersection norm. 
\end{prop}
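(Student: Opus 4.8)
The plan is to work on the torus $\sg_1$, where the whole computation becomes linear-algebraic, and to identify the dual unit ball of an intersection norm with a \emph{zonotope}. First I would use the two features that make genus one special: a simple closed curve is determined up to isotopy by its (primitive) homology class, and for two transverse simple closed curves the geometric intersection number equals the absolute value of the algebraic one, $i_\gm(\alpha)=|i_a(a,c)|$ when $[\alpha]=a$ and $[\gamma]=c$. Representing every curve by a straight line in the flat torus $\R^2/\Z^2$ realizes all these minima \emph{simultaneously}, so for a collection $\gm$ whose components carry homology classes $c_1,\dots,c_n$ one obtains the closed formula
$$N_\gm(a)=\sum_{j=1}^n |i_a(a,c_j)|,\qquad a\in H_1(\sg_1,\R).$$

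Next I would dualize this expression. Identifying $H_1(\sg_1,\R)\cong\R^2$, let $J$ be the rotation by $-\pi/2$; it is an automorphism of the lattice $H_1(\sg_1,\Z)\cong\Z^2$ and satisfies $i_a(a,b)=\langle a, Jb\rangle$ for the standard inner product. Writing $w_j:=Jc_j\in\Z^2$, the formula becomes $N_\gm(a)=\sum_j |\langle a, w_j\rangle|$. The key observation is that this is exactly the support function of the zonotope $Z=\sum_{j=1}^n[-w_j,w_j]$ (a Minkowski sum of segments), since $h_{[-w_j,w_j]}(a)=|\langle a,w_j\rangle|$ and support functions add under Minkowski sum. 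As $N_\gm$ is itself the support function of its dual unit ball and support functions determine convex bodies, this identifies
$$B_{N_\gm^*}=\sum_{j=1}^n[-w_j,w_j].$$
Thus the dual unit balls of intersection norms on the torus are precisely the integer zonotopes, and the mod $2$ congruence of their vertices $\sum_j\varepsilon_j w_j$, $\varepsilon_j\in\{\pm1\}$, is visible for free.

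It then remains to realize a given polygon. Let $P$ be a symmetric lattice polygon with vertices congruent mod $2$. Since any two consecutive vertices differ by a lattice vector that is $\equiv 0\pmod 2$, every edge vector of $P$ is even. By central symmetry the edges come in opposite parallel pairs, and every centrally symmetric polygon in the plane is a zonotope: choosing one representative $e_1,\dots,e_m$ per pair, one has $P=\sum_{j=1}^m[-\tfrac12 e_j,\tfrac12 e_j]$ after centering at the origin. The half edge vectors $w_j:=\tfrac12 e_j$ are integer vectors precisely because the edges are even. Setting $c_j:=J^{-1}w_j\in\Z^2$ and writing each $c_j=m_j u_j$ with $u_j$ primitive, I would take $\gm$ to consist of $m_j$ parallel copies of the simple closed curve in class $u_j$ for each $j$. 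By the computation above, $B_{N_\gm^*}=\sum_j[-w_j,w_j]=P$, and $N_\gm$ is a genuine norm whenever $P$ is full-dimensional.

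The technical heart, and the only step that is not purely combinatorial, is the first one: justifying that the infimum defining $N_\gm$ is attained and equals $\sum_j|i_a(a,c_j)|$, i.e. that the multicurve $\alpha$ and all components of $\gm$ can be placed in simultaneously minimal position. On the torus this is exactly what straightening to flat geodesics achieves, and two straight lines of classes $a$ and $c_j$ meet in exactly $|i_a(a,c_j)|$ points. Once this is established, recognizing $N_\gm$ as the support function of a zonotope and checking the evenness of the edges are routine convex and lattice geometry.
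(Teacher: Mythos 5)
Your proof is correct and follows essentially the same route as the paper: decompose $P$ as a Minkowski sum of symmetric lattice segments, realize each segment by parallel simple closed geodesics on the flat torus, and use additivity of the norm over geodesic components to get the Minkowski sum of dual balls. The only real difference is cosmetic but welcome: where the paper normalizes each segment by an $\mathrm{SL}(2,\Z)$ mapping class and simply asserts the segment decomposition, you compute the dual ball of a single curve directly as a support function and make explicit that the mod $2$ congruence of vertices is exactly what forces the edge vectors to be even, hence the half-edge generators $w_j$ to be lattice vectors.
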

\begin{proof}
First, if $P$ is a symmetric lattice segment in $\R^2$, then there is a matrix ~$A\in \rm{SL}(2,\Z)$ such that $P':=A(P)$ is a vertical segment with extremities in $\Z^2$. Moreover, $A$ has a geometric realization since $\rm{Mod}(\mathbb{T}^2)=SL(2,\Z)$. That is there is a homeomorphism $\phi$ of $\mathbb{T}^2$ such that $$\phi_*:H_1(\mathbb{T}^2,\R)\approx \R^2\longrightarrow H_1(\mathbb{T}^2,\R)\approx \R^2$$ is equal to $A$.\\ Now, let $l:=\frac{1}{2}\rm{length}(P')$; $l\in\N$. If $\alpha$ and $\beta$ are the canonical basis of ~$H_1(\mathbb{T}^2,\R)$, by taking $l$ parallel curves to $\beta$, we get a collection $\gm'$ in $\mathbb{T}^2$ such that $B_{N^*_{\gm'}}=P'$. So, $\gm:=\phi^{-1}(\gm')$ is such that $B_{N^*_{\gm}}=P$.\vspace{0,5cm}

Secondly, if $\gm:=\{\gamma_1,...,\gamma_n\}$ is a collection of closed geodesics on $\mathbb{T}^2$ (with the flat metric of constant curvature equal to 1), and if $a$ is a homology class represented by a collection $\alpha$ of oriented simple closed curves which are pairwise disjoint then
$$N_{\gm}(a)=i_{\gm}(\alpha)=\displaystyle{\sum_{j=1}^n{i_{\gamma_j}(\alpha)}}=\displaystyle{\sum_{j=1}^n{N_{\gamma_j}(a)}}.$$ 

It follows that the dual unit ball of $N_{\gm}$ is equal to the Minkowski sum of the dual unit balls of $N_{\gamma_j}$; which are symmetric lattice segments\string:
$$B_{N^*_{\gm}}=\bigoplus_j{B_{N^*_{\gamma_j}}}.$$

Finally, every symmetric lattice polygon of $\R^2$ is the Minkowski sum of finitely many symmetric lattice segments.

Combining the three arguments above we construct, for any symmetric lattice polygon $P$ with mod $2$ congruent vertices, a geodesic collection $\gm$ such that
$$B_{N^*_{\gm}}= P.$$

\end{proof}

\end{paragraph}

\begin{paragraph}{Minimality of the collection\string:} Now, we  show that we can restrict to collections in minimal position. 

\begin{definition}
Let $\gamma_1$ and $\gamma_2$ be two transverse closed curves on $\sg_g$. They are in \textit{\textbf{minimal position}} if they realize the geometric intersection in their free homotopy classes that is $$i(\gamma_1,\gamma_2)=\rm{card}\{\gamma_1\cap\gamma_2\}.$$

A collection $\gm$ is \textit{\textbf{minimal}} if all the curves in $\gm$ are pairwise in minimal position. 
\end{definition}

\begin{remark} One-faced collections are minimal.
\end{remark}

\begin{lemma}\label{mincollect}
Let $\gm$ be a collection of closed curves in $\sg_g$, then there is a minimal collection $\gm_{\rm{min}}$ such that $N_{\gm}=N_{\gm_{\rm{min}}}$.
\end{lemma}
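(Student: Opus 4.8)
The plan is to prove the sharper statement that the intersection number with the whole collection splits as a sum of the geometric intersection numbers with the individual curves:
$$i_{\gm}(\alpha)=\sum_{j=1}^{n} i(\alpha,\gamma_j)$$
for every loop $\alpha$, where $i(\alpha,\gamma_j)$ denotes the geometric intersection number of the free homotopy classes. Granting this, the right-hand side depends only on the free homotopy classes of $\gamma_1,\dots,\gamma_n$, hence so does $N_{\gm}(a)=\inf_{[\alpha]=a}\sum_j i(\alpha,\gamma_j)$. It then suffices to take for $\gm_{\min}$ any collection representing the same free homotopy classes that is pairwise in minimal position: for instance the geodesic representatives for a fixed hyperbolic (or flat, if $g=1$) metric on $\sg_g$, replacing each class occurring with multiplicity by disjoint parallel copies. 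Such a $\gm_{\min}$ is minimal by construction, represents the same classes, and therefore satisfies $N_{\gm_{\min}}=N_{\gm}$.

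The lower bound $i_{\gm}(\alpha)\ge\sum_j i(\alpha,\gamma_j)$ is immediate. For any $\alpha'\sim\alpha$ transverse to $\gm$ one has $\#\{\alpha'\cap\gm\}=\sum_j\#\{\alpha'\cap\gamma_j\}$, and each term satisfies $\#\{\alpha'\cap\gamma_j\}\ge i(\alpha,\gamma_j)$ because $i(\alpha,\gamma_j)$ is by definition the minimum of $\#\{\alpha'\cap\gamma_j\}$ over the free homotopy class of $\alpha$. Taking the infimum over $\alpha'$ yields the inequality.

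For the upper bound I would choose a representative $\alpha'$ of the class that minimizes the total number of intersections $\#\{\alpha'\cap\gm\}$; this minimum is attained, being a non-negative integer, and by the smoothing remark preceding Proposition~\ref{deh} it may be taken simple. I claim such an $\alpha'$ is automatically in minimal position with every $\gamma_j$ at once, which gives $\#\{\alpha'\cap\gm\}=\sum_j i(\alpha,\gamma_j)$. If not, then $\alpha'$ and some $\gamma_{j_0}$ fail to be in minimal position, so by the bigon criterion they bound a bigon; by a standard innermost argument one may select a bigon $B$, bounded by an arc $a\subset\alpha'$ and an arc $b\subset\gamma_{j_0}$ meeting at two corners, whose interior is disjoint from $\alpha'$. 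Pushing $a$ across $B$ past $b$ is a free homotopy of $\alpha'$, and I would control the resulting change in $\#\{\alpha'\cap\gm\}$ by classifying the arcs of $\gm$ lying in $\mathrm{int}(B)$ according to whether their endpoints lie both on $a$, both on $b$, or one on each. The two corner crossings (which lie on $\gamma_{j_0}$) disappear, arcs with both endpoints on $a$ force a further decrease, while arcs of the other two types leave the count unchanged; hence the total strictly drops, contradicting the minimality of $\alpha'$.

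The main obstacle is exactly this last piece of bookkeeping: making the innermost selection of $B$ rigorous, so that its interior meets neither $\alpha'$ nor any smaller bigon, and verifying that pushing $\alpha'$ across $B$ never increases the number of intersections with the other curves $\gamma_k$, $k\neq j_0$, which a priori may traverse $B$. Once this local count is established the argument is complete; the only remaining routine point, namely the existence of a pairwise-minimal collection realizing prescribed free homotopy classes, is settled by the geodesic-with-parallel-copies construction described above.
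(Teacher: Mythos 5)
Your argument hinges on the identity $i_{\gm}(\alpha)=\sum_{j} i(\alpha,\gamma_j)$, with $i(\alpha,\gamma_j)$ the geometric intersection number of the free homotopy classes, and on the resulting conclusion that $N_{\gm}$ depends only on the free homotopy classes of the $\gamma_j$. Both claims are false, and this is not a repairable piece of bookkeeping but the central difficulty of the lemma. In the definition of $i_{\gm}$ the collection $\gm$ is fixed up to isotopy of the whole configuration and only $\alpha$ is allowed to move, whereas $i(\alpha,\gamma_j)$ allows $\gamma_j$ to move as well; these two minima differ in general. Concretely, take $\gm=\{\gamma\}$ where $\gamma$ is a filling curve freely homotopic to a simple closed curve $\delta$ (drag a subarc of $\delta$ along a filling path and perturb to general position, so that $\sg_g-\gamma$ is a union of disks). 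Then $N_{\{\gamma\}}$ is a genuine norm, since any curve disjoint from $\gamma$ lies in a disk and is null-homologous; but $i(\alpha,\gamma)=i(\alpha,\delta)=0$ for any essential $\alpha$ homotopic to a curve disjoint from $\delta$, so your formula would force $N_{\{\gamma\}}([\alpha])=0$. In particular $N_{\{\gamma\}}\neq N_{\{\delta\}}$ even though $\gamma\sim\delta$, so no replacement of the $\gamma_j$ by geodesic representatives of their classes can preserve the norm. The same example locates the failure of your bigon step: a minimizer $\alpha'$ of $\#\{\alpha'\cap\gamma\}$ need not be in minimal position with $\gamma$ in the symmetric sense, because the obstructing bigons may be removable only by homotoping $\gamma$, which is forbidden.

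The paper's proof accepts that the homotopy classes of the constituent curves must change: it runs a decreasing homotopy in the sense of Hass--Scott and, whenever a bigon of $\gm$ would be deleted by a Reidemeister move of type $2$, it instead replaces the bigon by a single crossing. This crossing move alters the free homotopy classes of the curves of $\gm$ but leaves $i_{\gm}(\alpha)$ unchanged for every transversal $\alpha$, and it strictly decreases the number of double points, so the process terminates at a minimal collection $\gm_{\rm{min}}$ with $N_{\gm_{\rm{min}}}=N_{\gm}$. Any correct argument has to work with the configuration $\gm$ itself rather than with the classes of its components, which is exactly what this move accomplishes.
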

\begin{proof} One can apply a generic homotopy so that we get a collection in minimal position. Such a generic homotopy consists to do a finite number of Reidemester moves (1, 2 and 3 as depicted in Table 1). By Hass and Scott ~\cite{HasScott}, one can choose a decreasing homotopy with respect to the intersection number of the collection. Moves 1 and 3 do not change the norm, while Move 2 (deleting a bigon) changes the norm.

Then, we replace Move $2$ by a crossing (see Table~\ref{reid}). This new move changes the homotopy class of $\gm$ but it does not change the norm. By changing deleting bigon by a crossing, the self-intersection decreases by one. As we can choose a descending homotopy, we get a collection $\gm_{\rm{min}}$ in minimal position after applying finitely many Reidemester's move $1$ and $3$ and crossing move. Doing so, the norm does not change; hence we built a new collection $\gm_{\rm{min}}$ in minimal position such that  $N_{\gm}=N_{\gm_{\rm{min}}}$.   
\end{proof}

\begin{table}[h!]
\begin{center}
\begin{tabular}{cccc}
\includegraphics[scale=0.11]{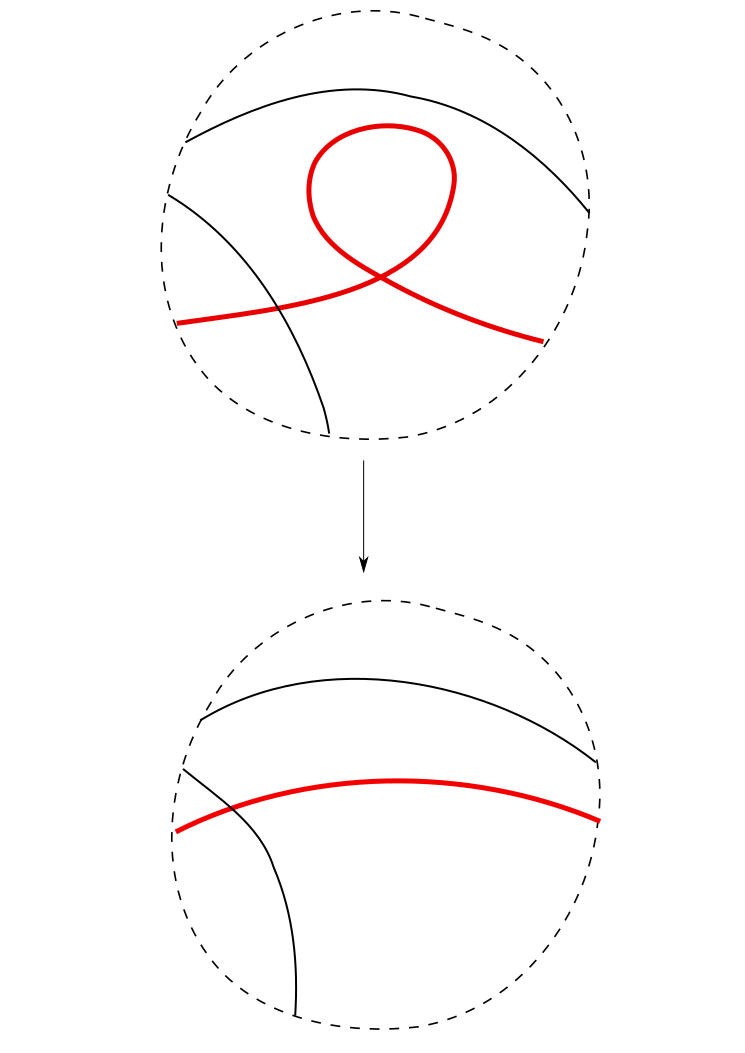}

&\includegraphics[scale=0.12]{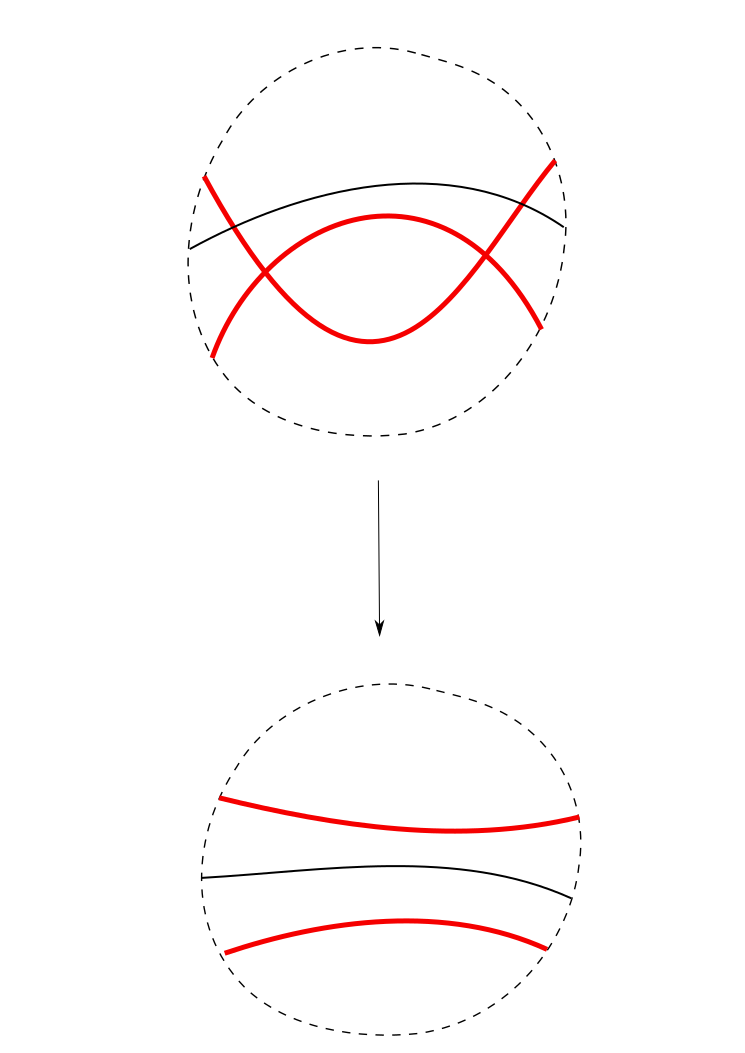}

\includegraphics[scale=0.11]{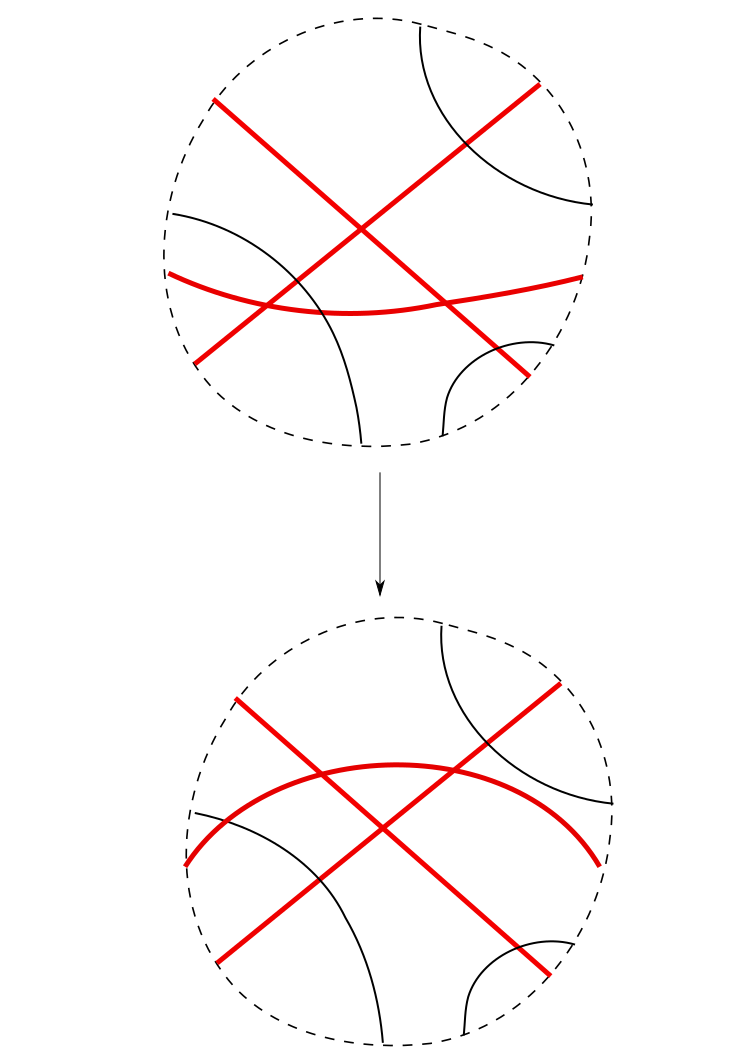}

&\includegraphics[scale=0.11]{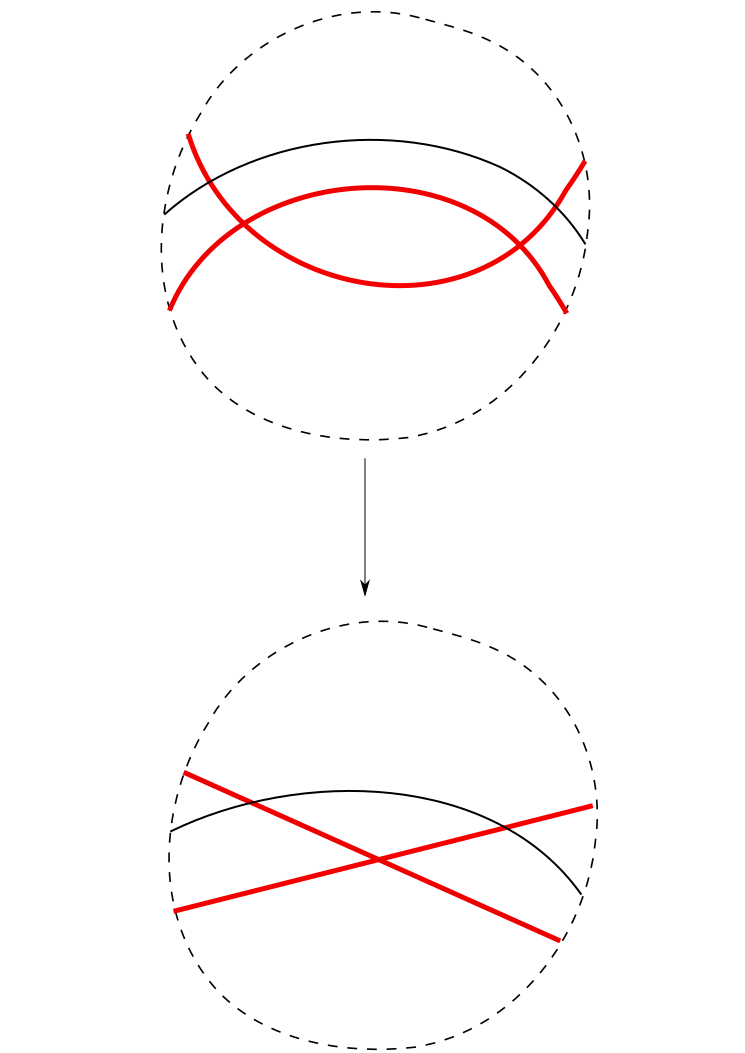}\\
\end{tabular}
\end{center}
\caption{From left to right, we have the three Reidemeister's moves and the last one is the crossing move. The curves in red color represents the local configuration of $\gm$, and curves in black are sub-arcs of curves in $\sg_g$. One can see that Reidemeister's move 1 and 3 and crossing move do not change the intersection. Otherwise, the bigon deleting does change the intersection.}
\label{reid}
\end{table} 
\end{paragraph}
\end{section}

\begin{section}{ Partial order and lower bound in the set of intersection norms on ~$\sg_g$}\label{sect2}

In this section, we first recall an important tool for computing the dual unit ball of intersection: \textit{\textbf{Eulerian co-orientation}}.\vspace{0,1cm}

Then we define a topological operation on collections of closed curves and relate it to the partial order on the set of all intersection norms.

We finish by proving that every intersection norm is bounded from below by an intersection norm induced by a one-faced collection.


\begin{subsection}{Eulerian Co-orientation\string:}

We consider a collection of closed curves $\gm$ on $\sg_g$ such that $\sg_g-\gm$ is a union of topological disks. The collection $\gm$ defines a filling graph on ~$\sg_g$. We denote by $V(\gm)$ the set of its vertices, defined as self-intersection points of $\gm$. Let $E(\gm)$ be the set of edges and ~$F(\gm)$ the set of faces. 

The Euler characteristic of $\sg_g$ is given by:
$$\chi(\sg_g)= 2-2g=	|V|-|E|+|F|.$$

\begin{definition}

A \textit{\textbf{co-orientation}} of $\Gamma$ is a choice of a positive way to cross (transversally) every edge of $\gm$. 

A co-orientation is \textit{\textbf{Eulerian}} if a small oriented circle centered at a vertex crosses positively two edges and negatively the other two, relatively to the co-orientation.
\end{definition}
\begin{figure}[htbp]
\begin{center}
\includegraphics[scale=0.15]{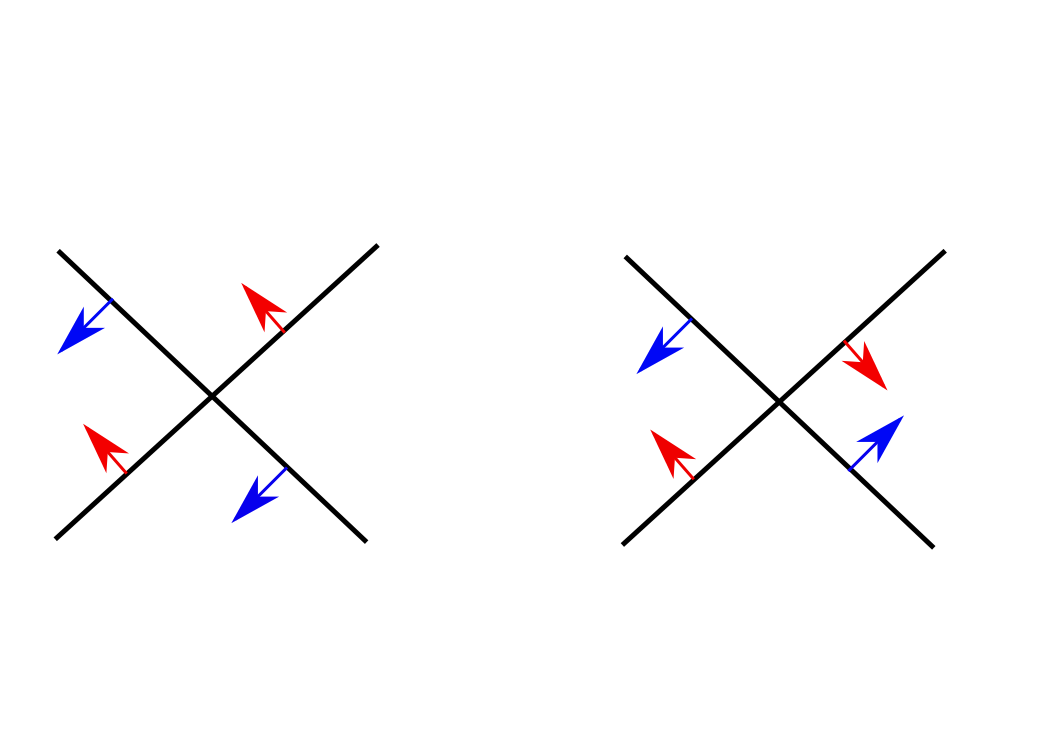}
\caption{Non alternating and alternating co-orientation.}
\label{corient}
\end{center}
\end{figure}

\begin{remarks}\leavevmode
\begin{itemize}
\item Up to rotation, we distinguish two types of Eulerian co-orienta\-tions around a vertex (see Figure~\ref{corient}). A vertex is \textit{non-alternating} if the arcs emanating from it, and belonging to the same curve are co-oriented in the same direction, otherwise it is \textit{alternating}.
\item To a co-orientation of an arc $A$ correspond an orientation of it. It is the one that gives, together with the co-orientation of $A$, the orientation of ~$\sg_g$.
\item A collection $\gm$ with $c$ curves has at least $2^c$ Eulerian co-orientations given by all the different ways to co-orient $\gm$ only by non-alternating vertices  (this is equal to the number of possibilities to orient $\gm$). 
\end{itemize}
\end{remarks}

Let $\alpha$ be an oriented closed curve on $\Sigma_g$ transverse to $\gm$, and let $\nu$ be a co-orientation of $\Gamma$. We define
\begin{equation*}
\nu(\alpha):=\displaystyle{\sum_{p\in \alpha\pitchfork\Gamma}\varepsilon(p,\alpha,\Gamma{\nu})}, 
\end{equation*}
where $\varepsilon(p,\alpha,\Gamma^{\nu})=\pm1$ depending on whether $\alpha$ crosses $\Gamma$ at $p$ in the direction of the co-orientation $\nu$ or not. Moreover, if $\nu$ is a Eulerian co-orientation, $$\nu(\alpha)=0\hspace{0,2cm} \rm{if}\hspace{0,2cm} [\alpha]=0.$$ Therefore, a Eulerian co-orientation $\nu$ defines a map 
\begin{align*}
[\nu]:H_1(\sg_g,\R)&\longrightarrow \R_+\\
        H_1(\sg_g,\Z)&\longrightarrow \N.\\             
\end{align*} 
So, a Eulerian co-orientation defines an integer cohomology class. We denote by $\rm{Eulco}(\gm)$ the set of all Eulerian co-orientations of $\gm$ and by $[\rm{Eulco}(\gm)]$ the set of their cohomology classes (different co-orientations can give the same cohomology class). 

\begin{thm}[M. Cossarini \& P. Dehornoy]\label{Pierre}
\textit{The set $[\rm{Eulco}(\gm)]$ is a subset of the unit dual ball $B_{N^*_{\gm}}$. Moreover, every integer vector in $B_{N^*_{\gm}}$, \rm{mod} 2 congruent to the vertices of $B_{N^*_{\gm}}$ belongs to $[\rm{Eulco}(\gm)]$}. 
\end{thm}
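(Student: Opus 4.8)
The plan is to prove the two assertions separately.

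\textbf{The inclusion.} Fix a Eulerian co-orientation $\nu$ and a class $a\in H_1(\sg_g,\Z)$. Since $\nu$ is Eulerian, $[\nu]$ is well defined on homology, so I may evaluate it on a $\gm$-minimizing (hence simple) multi-curve $\alpha$ with $[\alpha]=a$. Then $[\nu](a)=\nu(\alpha)$ is a sum of $\#\{\alpha\cap\gm\}=i_{\gm}(\alpha)=N_{\gm}(a)$ terms, each equal to $\pm1$, whence $|[\nu](a)|\le N_{\gm}(a)$. Applying this to $a$ and to $-a$ shows $[\nu]\in B_{N^*_{\gm}}$.

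\textbf{Reformulation.} For the converse I would first translate co-orientations into cochains. Equip $\sg_g$ with the CW structure dual to the filling graph $\gm$: the $0$-cells are the faces $F(\gm)$, the $1$-cells are dual to the edges $E(\gm)$, and the $2$-cells are dual to the $4$-valent vertices $V(\gm)$. After fixing a reference orientation on each dual edge, a co-orientation of $\gm$ is exactly a $1$-cochain $\omega$ with values in $\{+1,-1\}$, and for a loop pushed into the dual $1$-skeleton one has $\nu(\alpha)=\langle\omega,[\alpha]\rangle$. The Eulerian condition---a small circle around a vertex crosses its four edges with two positive and two negative signs---is precisely the cocycle condition $\delta\omega=0$ on the dual $2$-cell at that vertex. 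Hence $[\rm{Eulco}(\gm)]$ is exactly the set of classes carried by $\pm1$-valued cocycles, and I must realize the given $v$ by such a cocycle.

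\textbf{Realization.} Every $\pm1$-valued cochain reduces mod $2$ to the all-ones cochain, which is a cocycle; so all co-orientations share one mod-$2$ reduction, namely the one shared by the vertices of $B_{N^*_{\gm}}$ in the parity discussion preceding the theorem. The hypothesis on $v$ therefore lets me pick an integral cocycle $\omega_0$ in the class $v$ all of whose values are odd. I then look for $g:F(\gm)\to\Z$ so that $\omega:=\omega_0+2\,\delta g$ takes values in $\{\pm1\}$; writing $A_e=\tfrac12(1-\omega_0(e))\in\Z$, this is the system of difference constraints $A_e-1\le g(\mathrm{head}\,e)-g(\mathrm{tail}\,e)\le A_e$ on the connected dual graph, which is solvable over $\Z$ as soon as no oriented cycle has negative total bound. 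A cycle representing a class $a$ and meeting $\gm$ in $k$ points has total bound $\tfrac12(k-v(a))$; since any loop in the class $a$ meets $\gm$ at least $N_{\gm}(a)$ times and $v\in B_{N^*_{\gm}}$ forces $v(a)\le N_{\gm}(a)$, every cycle has non-negative bound. Solving for $g$ produces the desired $\pm1$-valued cocycle $\omega$ with $[\omega]=v$, i.e. a Eulerian co-orientation realizing $v$.

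\textbf{Main obstacle.} I expect the dictionary of the second paragraph---identifying the Eulerian condition with $\delta\omega=0$ and checking that $\nu(\alpha)=\langle\omega,[\alpha]\rangle$---together with the sign bookkeeping that turns $v\in B_{N^*_{\gm}}$ and the equality $\min_{[\alpha]=a}\#\{\alpha\cap\gm\}=N_{\gm}(a)$ into the no-negative-cycle condition, to be the delicate part; once this correspondence is set up, the feasibility of the difference constraints is standard.
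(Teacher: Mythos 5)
Your proposal is correct, but there is nothing in the paper to compare it with: the author does not prove Theorem~\ref{Pierre} at all and simply defers to Cossarini--Dehornoy (``The proof of Theorem \ref{Pierre} is well explained in \cite{Direc}''). So you have supplied an actual argument where the paper supplies a citation. Your argument is the standard one and it works: the easy inclusion follows from evaluating $[\nu]$ on a $\gm$-minimizing representative; the dictionary identifying Eulerian co-orientations with $\{\pm1\}$-valued $1$-cocycles on the CW structure dual to $\gm$ (the Eulerian condition at a $4$-valent vertex being exactly $\langle\delta\omega,c_v\rangle=2-2=0$ on the dual $2$-cell) is accurate; and the rounding step via $\omega=\omega_0+2\delta g$ reduces correctly to a system of integer difference constraints whose no-negative-cycle condition is precisely $v(a)\le N_{\gm}(a)\le k$ for every closed walk of length $k$ in the dual graph. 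Two points deserve one more sentence each if this were written out in full. First, the existence of an integral cocycle $\omega_0$ in the class $v$ with all values odd: one should say that $v\bmod 2$ equals the class of the all-ones mod-$2$ cocycle (this is where the hypothesis ``congruent mod $2$ to the vertices'' enters, via the paper's identification of that common class with the Poincar\'e dual of $[\gm]$ mod $2$), hence any integral cocycle $\omega$ in class $v$ satisfies $\omega\equiv\mathbf{1}+\delta h\ (\mathrm{mod}\ 2)$ for some $h$ on the faces, and $\omega_0:=\omega-\delta\tilde h$ for an integer lift $\tilde h$ does the job. Second, the feasibility of the difference constraints over $\Z$ uses that the dual graph is connected (true since $\gm$ is filling and $\sg_g$ is connected) and that shortest-path potentials with integer weights are integers; both are routine but worth recording. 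With those two sentences added, this is a complete and self-contained proof.
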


The proof of Theorem \ref{Pierre} is well explained in \cite{Direc}.
\end{subsection}


\begin{subsection}{Topological operation on collections of curves}
Now, we explain how we can topologically compare collections of curves.\\
Let $p$ be a self-intersection point of $\gm$. We construct two collections $\gm_1$ and ~$\gm_2$ by smoothing the collection $\gm$ at $p$ in the two possible ways (see Figure 2).

 After this smoothing, we get two collections, named $\widetilde{\gm_1}$ and $\widetilde{\gm_2}$, a priori not in minimal position. By Corollary 1.1, there are two minimal collections $\gm_1$ and ~$\gm_2$  such that $N_{\widetilde{\gm_1}}=N_{\gm_1}$ and $N_{\widetilde{\gm_2}}=N_{\gm_2}$.
 
 \begin{figure}[htbp]
\begin{center}
\includegraphics[scale=0.12]{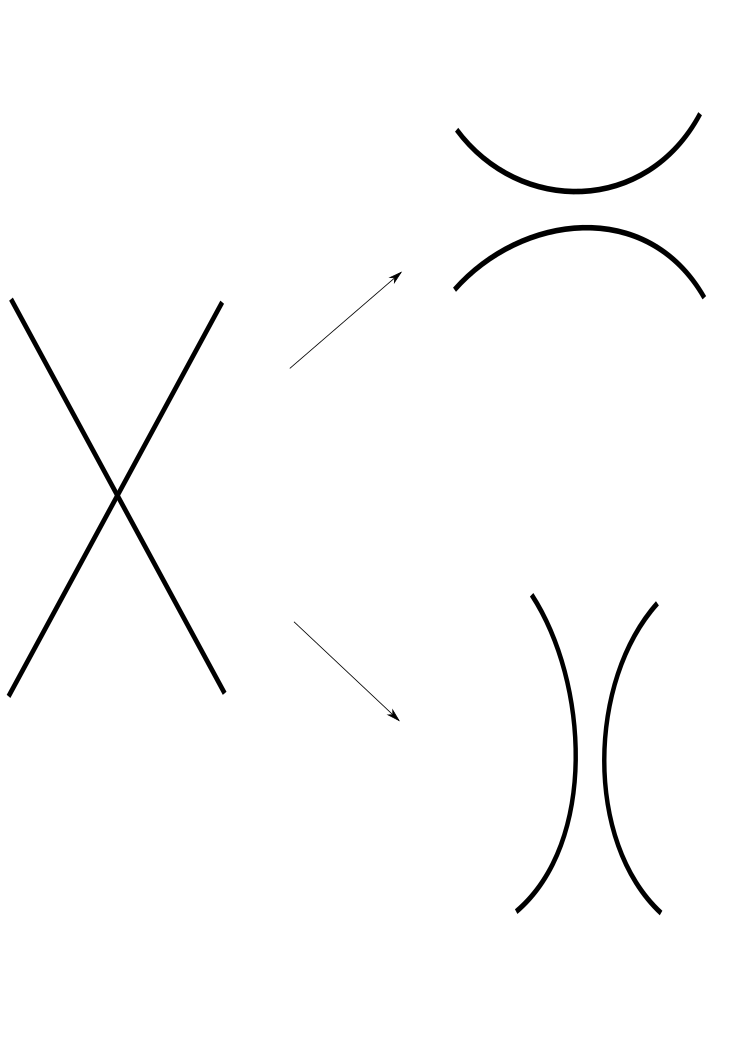}
\caption{Smoothing at a self-intersection point.}
\label{smth}
\end{center}
\end{figure}

 \begin{lemma}\label{smt}
 The collections $\widetilde{\gm_1}$ and $\widetilde{\gm_2}$ obtained by smoothing an intersection point of $\gm$ are such that 
 
 $$[\rm{Eulco}(\gm)]=[\rm{Eulco}(\widetilde{\gm_1})]\cup[\rm{Eulco}(\widetilde{\gm_2})].$$
 
In particular, we have $$B_{N^*_{\gm}}=conv(B_{N^*_{\gm_1}} \cup B_{N^*_{\gm_2}}).$$
\end{lemma}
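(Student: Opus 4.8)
The plan is to prove the two displayed identities, the first of which (about Eulerian co-orientations) is the essential one, with the second being a formal consequence of the first together with Theorem~\ref{Pierre}.

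\textbf{Setting up the correspondence.} First I would analyze what a smoothing at the self-intersection point $p$ does to the local combinatorics of the filling graph. Away from $p$ the graphs $\gm$, $\widetilde{\gm_1}$ and $\widetilde{\gm_2}$ coincide, so the edges are in natural bijection except near $p$; a co-orientation is precisely a transverse sign on each edge, so co-orienting $\gm$ is ``the same data'' as co-orienting $\widetilde{\gm_i}$ once we understand the local picture. The four arcs emanating from $p$ pair up into two edges in $\gm$ (the two strands crossing), and each smoothing reconnects these four arc-ends into two arcs in a different way. The key observation I would establish is that the Eulerian condition at $p$ for $\gm$ (exactly two of the four arc-ends co-oriented inward) corresponds exactly to one of the two smoothed configurations being Eulerian: the two rotation-types of Eulerian co-orientation at a $4$-valent vertex (alternating versus non-alternating, as in the Remarks) are precisely distinguished by which of the two smoothings they are compatible with. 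So a given Eulerian co-orientation $\nu$ of $\gm$ restricts to an Eulerian co-orientation of exactly one of $\widetilde{\gm_1}$, $\widetilde{\gm_2}$, and conversely any Eulerian co-orientation of $\widetilde{\gm_i}$ extends back across the reglued vertex to an Eulerian co-orientation of $\gm$. This gives the bijection $\mathrm{Eulco}(\gm) = \mathrm{Eulco}(\widetilde{\gm_1}) \sqcup \mathrm{Eulco}(\widetilde{\gm_2})$ at the level of co-orientations, and since the assigned signs on all edges away from $p$ are unchanged, the induced cohomology class $[\nu]$ is literally the same cohomology class before and after. Passing to cohomology classes (which merges the disjoint union into a union, possibly with overlap) yields $[\mathrm{Eulco}(\gm)] = [\mathrm{Eulco}(\widetilde{\gm_1})] \cup [\mathrm{Eulco}(\widetilde{\gm_2})]$.

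\textbf{Deriving the convex-hull statement.} For the ``in particular,'' I would invoke Theorem~\ref{Pierre} twice: the vertices of each dual ball are integer vectors realized as co-orientation classes, all mutually congruent mod $2$, and conversely every such mod-$2$-congruent integer vector in the ball is a co-orientation class. Hence $B_{N^*_{\gm}}$ is the convex hull of $[\mathrm{Eulco}(\gm)]$, and similarly for $\gm_1,\gm_2$ (using $N_{\widetilde{\gm_i}}=N_{\gm_i}$). Applying $\mathrm{conv}$ to the co-orientation identity and distributing the convex hull over the union gives
\[
B_{N^*_{\gm}} = \mathrm{conv}\,[\mathrm{Eulco}(\gm)] = \mathrm{conv}\big([\mathrm{Eulco}(\widetilde{\gm_1})]\cup[\mathrm{Eulco}(\widetilde{\gm_2})]\big) = \mathrm{conv}\big(B_{N^*_{\gm_1}}\cup B_{N^*_{\gm_2}}\big),
\]
where the last equality uses that the convex hull of a union equals the convex hull of the union of the convex hulls.

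\textbf{Main obstacle.} The delicate point is the local bijection at $p$, and in particular the claim that an Eulerian co-orientation of $\gm$ restricts to an Eulerian co-orientation of \emph{exactly one} smoothing rather than both or neither. The smoothed vertex becomes two (or zero) genuine crossings, so I must check carefully that the ``two in, two out'' balance at $p$ is preserved as the local balance at each new piece, and that the parity/mod-$2$ bookkeeping in Theorem~\ref{Pierre} is not disturbed by replacing $\widetilde{\gm_i}$ with its minimal representative $\gm_i$ (this is where $N_{\widetilde{\gm_i}}=N_{\gm_i}$ and Lemma~\ref{mincollect} are used). A secondary subtlety is that $\widetilde{\gm_i}$ need not be filling, so $[\mathrm{Eulco}(\widetilde{\gm_i})]$ may span a proper subspace; but this causes no problem, since the union on the right is taken inside the common ambient $H^1(\sg_g,\R)$ and the convex-hull identity holds regardless of dimension.
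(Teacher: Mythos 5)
Your overall route is the same as the paper's: compare Eulerian co-orientations of $\gm$ and of the two smoothings by observing that they agree outside a small disk around $p$, conclude that the corresponding cohomology classes coincide, and then convert the identity for $[\mathrm{Eulco}(\cdot)]$ into the convex-hull statement via Theorem~\ref{Pierre} together with Lemma~\ref{mincollect}. However, the intermediate claim you single out as the delicate point --- that an Eulerian co-orientation of $\gm$ is compatible with \emph{exactly one} of the two smoothings, so that $\mathrm{Eulco}(\gm)=\mathrm{Eulco}(\widetilde{\gm_1})\sqcup\mathrm{Eulco}(\widetilde{\gm_2})$ as a disjoint union --- is false. Label the four edge-germs at $p$ cyclically $1,2,3,4$, so that the two strands are $\{1,3\}$ and $\{2,4\}$ and the two smoothings reglue them as $\{1,2\},\{3,4\}$ or as $\{1,4\},\{2,3\}$. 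There are six Eulerian patterns at $p$ (two germs crossed positively, two negatively), not two: the four \emph{non-alternating} ones (positive pair cyclically adjacent) descend to exactly one smoothing, but the two \emph{alternating} ones (positive pair $\{1,3\}$ or $\{2,4\}$) descend to \emph{both} smoothings, since in either regluing each new arc receives one inward and one outward germ. Hence the sets $\mathrm{Eulco}(\widetilde{\gm_1})$ and $\mathrm{Eulco}(\widetilde{\gm_2})$ embed into $\mathrm{Eulco}(\gm)$ with overlapping images, and it is also not true that the alternating/non-alternating dichotomy is what selects the smoothing. Fortunately this does not damage the lemma: its statement only asserts a union of sets of cohomology classes, and the two inclusions you actually need --- every Eulerian co-orientation of $\widetilde{\gm_i}$ extends uniquely across $p$ to one of $\gm$, and every Eulerian co-orientation of $\gm$ descends to \emph{at least} one smoothing --- are both true and are precisely what the paper proves. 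You should simply drop the exclusivity claim, which is wrong and unnecessary; the rest of your argument, including the use of Theorem~\ref{Pierre} to identify each dual ball with the convex hull of its co-orientation classes and the passage from $\widetilde{\gm_i}$ to the minimal representative $\gm_i$ via $N_{\widetilde{\gm_i}}=N_{\gm_i}$, matches the paper.
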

\begin{proof}
Let $p$ be an intersection point  of $\gm$ and $\widetilde{\gm_1}$ one of the collections obtained by smoothing $\gm$ at $p$. The collection $\gm$ differs to $\gm_1$ only in a small neighborhood (which is a disk) of $p$. To a co-orientation $\nu_1$ of $\widetilde{\gm_1}$, we associate a co-orientation $\nu$ of $\gm$ in the following way: we keep the same co-orientation of $\widetilde{\gm_1}$ on $\gm$ outside a small neighborhood of $p$.

 Since $\nu_1$ is an Eulerian co-orientation, the boundary of the neighborhood around $p$ intersects two times $\nu_1$ in a positive direction and two times in a negative direction. It implies that the induced co-orientation on $\gm$ by ~$\nu_1$ outside the neighborhood of $p$ can be completed in a unique way in the neighborhood of $p$; the point $p$ at the end could be non-alternating or alternating. We then obtain an Eulerian co-orientation $\nu$ of $\gm$.

As the coordinates of a vector associated to a Eulerian co-orientation are obtained by evaluating this co-orientation on a basis of $H_1(\sg_g,\Z)$, one can choose the basis such that they do not enter in the neighborhood of $p$. It implies that $\nu$ and $\nu_1$ evaluated on that basis will give the same vector. Hence,  $ [\rm{Eulco}(\widetilde{\gm_1})]\subset[\rm{Eulco}(\gm)]$.

Now let $\nu$  be a Eulerian co-orientation of $\gm$. Then, by smoothing $\gm$ at ~$p$ with respect to  the co-orientation $\nu$ at $p$ (or with respect to the induced orientation of the four arcs emanating from $p$), we obtain a co-orientation $\nu_1$ of $\widetilde{\gm_1}$ or $\widetilde{\gm_2}$ which is equal to $\nu$ outside a small neighborhood of $p$. Therefore, $$[\nu]=[\nu_1].$$

So, we have   $$ [\rm{Eulco}(\gm)]\subset[\rm{Eulco}(\widetilde{\gm_1})]\cup[\rm{Eulco}(\widetilde{\gm_2})]$$ and the equality
 $$[\rm{Eulco}(\gm)]=[\rm{Eulco}(\widetilde{\gm_1})]\cup[\rm{Eulco}(\widetilde{\gm_2})]$$ 
 holds.
 
Finally, by Lemma ~\ref{mincollect} and Lemma ~\ref{Pierre}, we have $$B_{N^*_{\gm}}=\rm{conv}(B_{N^*_{\gm_1}} \cup B_{N^*_{\gm_2}}).$$
\end{proof}
\vspace{0,3cm}

\begin{definition}
Let $\gm_1$ and $\gm_2$ two collections of curves on $\sg_g$. We say that ~$N_{\gm_1}\leq ~N_{\gm_2}$ holds if the inequality holds as functions of $H_1(\sg_g,\R)$. This is equivalent to have $B_{N^*_{\gm_1}}\subset B_{N^*_{\gm_2}}$.  
\end{definition}

\begin{lemma}
If $\gm_1$ is a collection obtained by smoothing $\gm$ at a point $p$, then $$N_{\gm_1}\leq N_{\gm}.$$
\end{lemma}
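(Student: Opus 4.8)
The plan is to read this statement off directly from Lemma~\ref{smt} together with the definition of the partial order $N_{\gm_1}\leq N_{\gm_2}\Leftrightarrow B_{N^*_{\gm_1}}\subset B_{N^*_{\gm_2}}$ introduced just above. Let $p$ be the self-intersection point at which $\gm$ is smoothed, and let $\gm_1$ and $\gm_2$ denote the two collections produced by the two smoothings of $\gm$ at $p$, so that the collection called $\gm_1$ in the statement is one of these two (the argument for the other smoothing is identical by symmetry).

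First I would invoke Lemma~\ref{smt}, which establishes the convex-hull identity $B_{N^*_{\gm}}=\mathrm{conv}(B_{N^*_{\gm_1}}\cup B_{N^*_{\gm_2}})$. Since $B_{N^*_{\gm_1}}$ is one of the two sets whose union is being taken, it is automatically contained in the convex hull of that union; hence $B_{N^*_{\gm_1}}\subset B_{N^*_{\gm}}$. No separate topological argument is needed at this stage, because the smoothing was already analysed at the level of Eulerian co-orientations when proving Lemma~\ref{smt}.

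Finally I would translate this inclusion of dual unit balls back into an inequality of norms. By the definition of the partial order on intersection norms, the inclusion $B_{N^*_{\gm_1}}\subset B_{N^*_{\gm}}$ is exactly the assertion $N_{\gm_1}\leq N_{\gm}$, which is the desired conclusion. The same reasoning applied to the second smoothing gives $N_{\gm_2}\leq N_{\gm}$ as well.

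I do not expect a genuine obstacle here: the real content has already been carried out in Lemma~\ref{smt}, and the present lemma is merely its order-theoretic reformulation through the duality between a norm and its dual unit ball. The only point requiring care is the direction of that duality, namely that a \emph{larger} dual unit ball corresponds to a \emph{smaller} norm; but this is precisely how the partial order was defined, so once the inclusion $B_{N^*_{\gm_1}}\subset B_{N^*_{\gm}}$ is in hand the conclusion is immediate.
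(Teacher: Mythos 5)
Your proposal is correct and follows essentially the same route as the paper: the paper's proof is a one-line appeal to Lemma~\ref{smt}, from which the inclusion $B_{N^*_{\gm_1}}\subset B_{N^*_{\gm}}$ (and hence the norm inequality, by the definition of the partial order) is immediate. Your write-up is in fact slightly more careful than the paper's, which contains a typo in the stated inclusion.
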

\begin{proof}
By Lemma \ref{smt}, $B_{N^*_{\gm_1}}\subset B_{N^*_{\gm_1}}$.
\end{proof}\vspace{0,2cm}

\begin{definition}
A filling collection $\gm$ is \textbf{\textit{one-faced}} (respectively \textit{\textbf{two-faced}}) if $\sg_g-\gm$ is a disk (two disks).
\end{definition}

For a graph defined by a collection of curves with only double points, we have $|E|=2|V|$. Then, the Euler characteristic of the surface is:
$$\chi(\sg_g)=2-2g=|F|-|V|.$$
It follows that for a filling collection, we have $|V|=|F|+2g-2\geq 2g-1$. Therefore, the minimum is obtained for one-faced collections. In particular, in genus two, a one-faced collection has self-intersection number equal to $3$.
\begin{definition}
A norm $N_{\gm}$ is \textit{\textbf{even}} if it is so as a function of $H_1(\sg_g,\Z)$; otherwise, it is \textit{\textbf{odd}}.
\end{definition}\vspace{0,5cm}

The following lemma is one of the cornerstone of this article.
\begin{lemma}[\textit{Lower bound for intersection norms}]\label{lowerbound}
Every intersection norm defined by a filling collection is bounded from below by a norm defined by a two-faced collection. Moreover, if the norm is odd, it is bounded from below by a one-faced collection.
\end{lemma}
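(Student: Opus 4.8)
```latex
\textbf{Proof proposal.} The plan is to use the smoothing operation developed in Lemma~\ref{smt} as a ``divide and conquer'' tool that drives any filling collection down, in the partial order $\leq$, towards collections with as few faces as possible. Since smoothing a self-intersection point $p$ of a filling collection $\gm$ produces two collections $\widetilde{\gm_1},\widetilde{\gm_2}$ with $N_{\gm_i}\leq N_{\gm}$ (the previous lemma), and since $B_{N^*_{\gm}}=\mathrm{conv}(B_{N^*_{\gm_1}}\cup B_{N^*_{\gm_2}})$, the dual ball of $\gm$ is the convex hull of the two smaller dual balls. The key point I would exploit is that at least one of the two smoothings must keep the collection \emph{filling}: a bigon/annulus cannot appear on both sides, so at least one branch does not disconnect the surface into too many pieces and has a complement that is still a union of disks. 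Tracking the face count through one smoothing is the combinatorial heart of the argument.

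First I would set up the face-counting bookkeeping. For a filling collection we established $|V|=|F|+2g-2$, so on $\sg_g$ the number of self-intersection points exceeds $2g-2$ unless the collection is one-faced. I would smooth at a point $p$ and analyze how $|F|$ changes: smoothing removes one vertex, hence one expects $|F|$ to change by $\pm1$ (Euler characteristic is preserved, $|V|$ drops by one, $|E|$ drops by two, so $|F|$ drops by one for a filling collection, but a face may split or merge depending on whether the two local strands bounded a common face). The crucial dichotomy is: smoothing at $p$ either merges two distinct faces into one, or splits one face into two. I would choose, at each step, a smoothing that keeps the complement a union of disks and strictly decreases $|V|$; iterating, I descend to a filling collection with the minimal allowed number of vertices, which is one-faced ($|F|=1$) or two-faced ($|F|=2$).

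The second, more delicate step is the parity statement. Here I would invoke the mod~$2$ rigidity already recorded in the excerpt: all integer vertices of $B_{N^*_{\gm}}$ are congruent mod~$2$, and a norm is even exactly when $0$ lies in that congruence class, i.e.\ when every $\gm$-minimizing class has even intersection with $\gm$. When $N_{\gm}$ is \emph{odd}, the vertices are congruent to a nonzero class mod~$2$; I would argue that this parity obstruction forbids the descent from terminating at a two-faced collection with an even norm on the relevant rays, forcing the terminal collection to be one-faced. Concretely, the two-faced case carries a natural $\Z/2$ coloring of its two faces, which corresponds to an even co-orientation class, so a two-faced collection realizing an odd norm cannot be a minimizer in the chain; the odd parity propagates under smoothing and can only be supported by a one-faced terminal object.

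The main obstacle, I expect, is controlling the face count rigorously through a single smoothing while simultaneously guaranteeing the collection stays filling. The local picture near $p$ is clear, but whether smoothing merges or splits faces is a \emph{global} fact about which faces of $\sg_g-\gm$ abut $p$, and one must rule out the creation of a non-disk complementary region. I would handle this by a careful Euler-characteristic accounting combined with the observation that a smoothing producing a non-disk region could be replaced by the other (complementary) smoothing, using Lemma~\ref{smt} to see that the union of dual balls is unchanged; so at least one of the two smoothings is admissible at each stage. Verifying that this admissible branch always exists, and that the parity is transmitted correctly to isolate the one-faced case, is where the real work lies.
```
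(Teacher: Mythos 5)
Your overall strategy --- iteratively smoothing crossings to merge faces so as to descend in the partial order, then using a parity argument to rule out a two-faced terminal object when the norm is odd --- is the same as the paper's. But two of your key claims have genuine gaps. First, your dichotomy ``smoothing at $p$ either merges two distinct faces into one, or splits one face into two'' is not correct: if the two opposed faces at $p$ coincide, the corresponding smoothing attaches a band to a single disk and produces an \emph{annular} complementary region, so the collection stops being filling; and at a vertex where \emph{both} pairs of opposed faces coincide, \emph{neither} smoothing is admissible, so your fallback ``replace it by the other (complementary) smoothing'' fails there. The correct move (the paper's) is to smooth only at vertices where two \emph{distinct} faces are opposed, choosing the smoothing that joins them; this keeps the collection filling and drops $|F|$ by one. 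What you are then missing is the reason this process cannot get stuck at a collection with three or more faces: one must show that if at \emph{every} vertex the opposed faces agree, then the collection has at most two faces. The paper proves this by propagating the local configuration $F_a-F_b-F_a-F_b$ along a Eulerian cycle: consecutive vertices of the cycle share an edge, hence see the same pair of faces, so every vertex sees only $F_a$ and $F_b$ and $|F|\le 2$. Without this step your descent has no reason to terminate at a one- or two-faced collection.

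Second, the same Eulerian-cycle argument is what your parity claim rests on. A general two-faced collection need not carry a proper $2$-colouring of its faces (an edge can have the same face on both sides), and then the norm need not be even. It is only for the terminal, irreducible two-faced collection --- where every vertex has configuration $F_a-F_b-F_a-F_b$ with $F_a\neq F_b$, hence every edge separates the two faces --- that a transverse curve must alternate between $F_a$ and $F_b$ and therefore meets the collection an even number of times. Combined with the fact that smoothing preserves the parity of geometric intersection, this makes the original norm even, and the contrapositive gives the odd case. So the parity mechanism you sketch is the right one, but it depends on the structural fact about the terminal collection that you have not established.
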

\begin{proof}
Let $\gm$ be a filling collection on $\sg_g$. We assume that $|F(\gm)|\geq 2$. Let  $p$ a double point of $\gm$ such that two different faces, say $F_1$ and $F_2$, are opposed at $p$.

By smoothing $\gm$ at $p$ so that the faces $F_1$ and $F_2$ are joined (see Figure~\ref{addit}), we define a new filling collection with one  face less. Step by step, following this process of smoothing intersection points at which two different faces are opposed, we reach a filling collection $\gm_n$ on which opposed faces at double point are the same and satisfy $N_{\gm_n}\leq N_{\gm}$.

Now, let $p$ be one of the double points of $\gm_n$, $e:=(v_1=p, v_2,...., v_n=p)$ a Eulerian cycle based at $p$, $F_a$ and $F_b$ the two faces at $p$ such that when we turn around $p$, we read $F_a-F_b-F_a-F_b$. 

The faces $F_a$ and $F_b$ are again the faces at the vertex $v_2$ as $p:=v_1$ and $v_2$ share a common edge. So, the point $v_2$ has the same configuration $F_a-F_b-F_a-F_b$. 

Since $v_3$ shares an edge with $v_2$ the faces $F_a$ and $F_b$ are again the faces at ~$v_3$, and $v_3$ again has the same configuration of faces. By applying this process step by step at each vertex of the Eulerian cycle, we show that around every vertex, we have the configuration $F_a-F_b-F_a-F_b$. Then $\gm_n$ has one or two faces according to whether $F_a$ is equal to $F_b$ or not. 

Suppose that $\gm_n$ is two-faced without any possibility of reduction. Then by the above argument any edge of $\gm_n$ separates two different faces. Therefore, $N_{\gm_n}$ is even.  In fact, if $\gamma$ is a transverse curve to $\gm_n$, then $\gamma$ alternates between $F_a$ and $F_b$ at each intersection with $\gm$. It implies that $i_{\gm_n}(\gamma)$ is even. So is ~$i_{\gm}(\gamma)$ since smoothing does not change the parity of the geometric intersection. 

Finally, an odd norm reduces to a one-faced collection.  
\begin{figure}[h!]
\begin{center}
\includegraphics[scale=0.15]{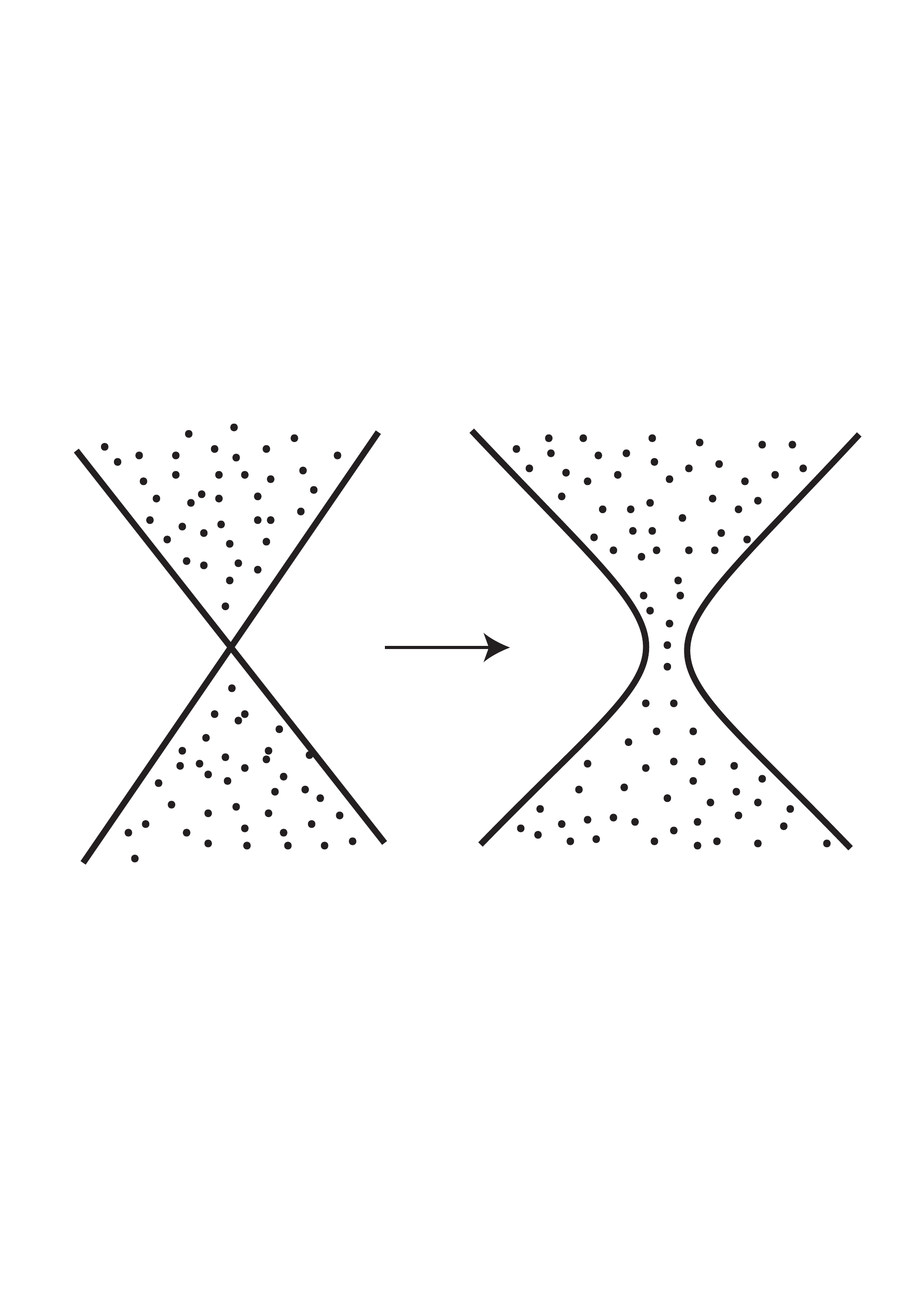}
\caption{Addition of two different faces.}
\label{addit}
\end{center}
\end{figure}
\end{proof}


\begin{corollary}\label{incube}
Every intersection norm with dual unit ball in the cube ~$[-1,1]^{2g}$ is bounded from below by a norm defined by a one-faced collection.
\end{corollary}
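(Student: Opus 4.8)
The plan is to deduce this corollary straight from Lemma~\ref{lowerbound}. That lemma already provides a one-faced lower bound for \emph{odd} norms, so the entire task reduces to verifying that the hypothesis $B_{N^*_{\gm}}\subset[-1,1]^{2g}$ forces $N_{\gm}$ to be odd. I would first observe that, since we are dealing with an intersection \emph{norm} (its dual unit ball is full-dimensional), the underlying collection $\gm$ fills $\sg_g$; this is exactly the situation in which Lemma~\ref{lowerbound} applies.

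The key step is a short duality computation. A norm is the support function of its own dual unit ball, so
\[
N_{\gm}(a)=\sup_{v\in B_{N^*_{\gm}}}\langle v,a\rangle,\qquad a\in H_1(\sg_g,\R).
\]
Because $B_{N^*_{\gm}}$ sits inside the cube $[-1,1]^{2g}$, every $v=(v_1,\dots,v_{2g})$ occurring in the supremum has $|v_j|\le 1$, whence $\langle v,a\rangle=\sum_j v_j a_j\le\sum_j|a_j|$; that is, $N_{\gm}$ is bounded above by the $\ell^{1}$-norm. Evaluating at a standard basis vector $e_i$ of the lattice $H_1(\sg_g,\Z)$ gives $N_{\gm}(e_i)\le 1$, while $N_{\gm}$, being an integer norm, takes positive integer values on nonzero lattice elements, so $N_{\gm}(e_i)\ge 1$. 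Hence $N_{\gm}(e_i)=1$.

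Since $N_{\gm}$ takes the odd value $1$, it is not even as a function on $H_1(\sg_g,\Z)$, i.e.\ it is odd. The second assertion of Lemma~\ref{lowerbound} then produces a one-faced collection $\gm'$ with $N_{\gm'}\le N_{\gm}$, which is the claim. I expect no real obstacle here, as the corollary is a quick consequence of the lemma; the only points needing care are confirming that an intersection norm (rather than a mere semi-norm) forces $\gm$ to fill, so that the lemma is applicable, and keeping the polar duality in the correct direction, so that containment of the dual ball in the cube bounds $N_{\gm}$ above by the $\ell^{1}$-norm rather than below.
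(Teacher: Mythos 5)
Your proof is correct and follows the same route as the paper: deduce that the cube containment forces $N_{\gm}$ to be odd, then invoke the second assertion of Lemma~\ref{lowerbound}. In fact the paper's own proof simply asserts the oddness in one line, whereas you supply the (correct) justification via the support-function identity $N_{\gm}(e_i)=\sup_{v\in B_{N^*_{\gm}}}\langle v,e_i\rangle\le 1$ combined with integrality, so your write-up is a more complete version of the same argument.
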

\begin{proof}
If  $B_{N^*_{\gm}}$ is a sub-polytope of $[-1,1]^{2g}$, then $N_{\gm}$ is odd and applying Lemma~\ref{lowerbound}, we obtain the result. 
\end{proof}
\end{subsection}
\end{section}

\begin{section}{Orbits of one-faced collections with dual unit ball in the cube $[-1,1]^4$}\label{sect3} 

Now, we  show that there are four orbits, under the mapping class group action, of one-faced collections with dual unit ball in the cube $[-1,1]^4$ (Theorem~\ref{counting}). 

\begin{paragraph}{Partial configuration.} We consider a collection of closed (non oriented) curves $\gm=\{\gamma_1,.....\gamma_n\}$ in $\sg_2$ whose complement is one disk. 

In what follows $\alpha_1, \beta_1, \alpha_2$ and $\beta_2$ are the oriented simple closed curves, that canonically represent the generators of the first homology group (see Figure~\ref{basesym}). Let $\eta:=\alpha_1\beta_1\alpha_1^{-1}\beta_1^{-1}$ be the curve depicted in red and $A_{\eta}$ be a tubular neighborhood of $\eta$.   
\begin{figure}[h!]
\begin{center}
\includegraphics[scale=0.18]{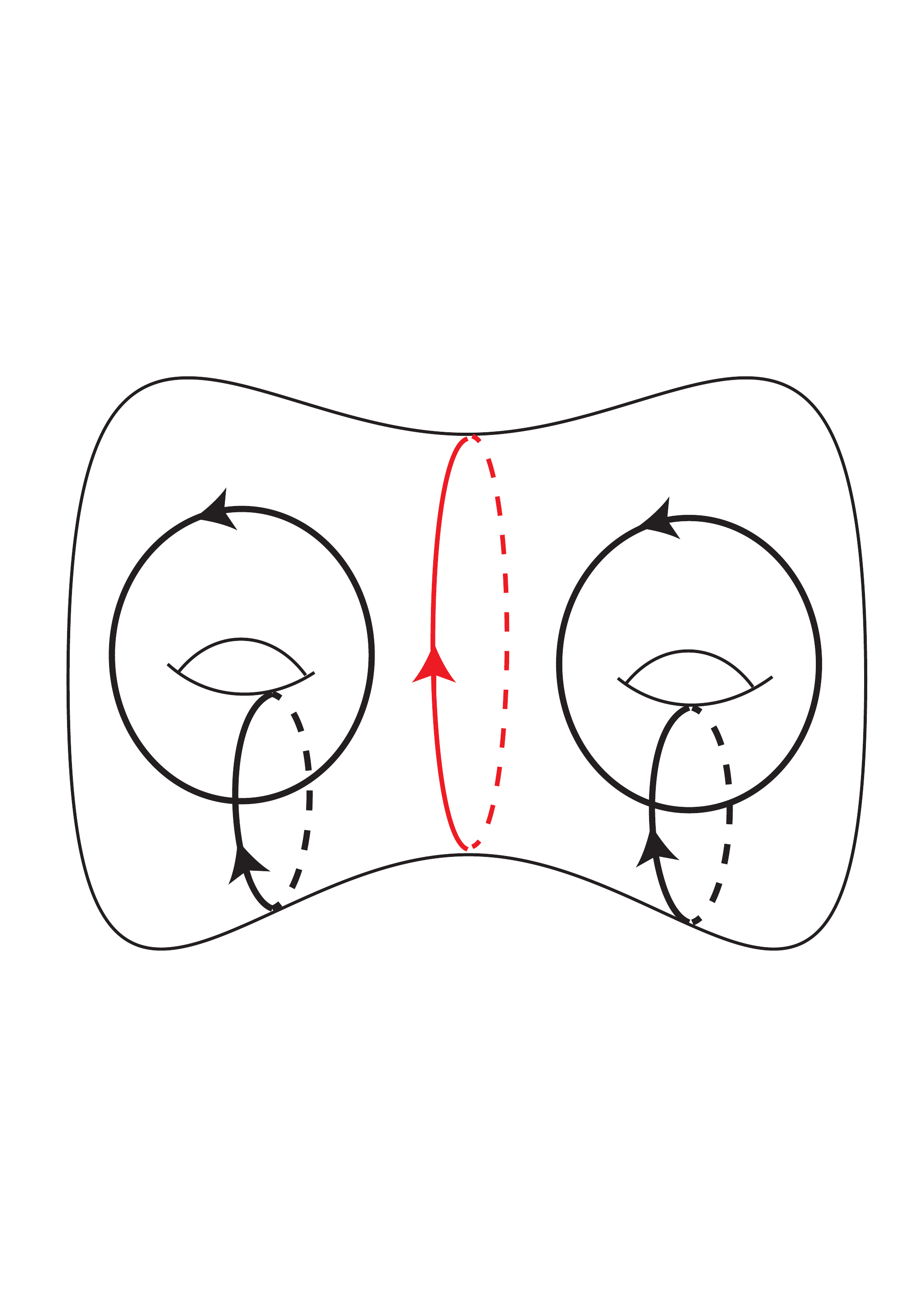}
\put(-28,13){$\alpha_1$}
\put(-13,13){$\alpha_2$}
\put(-30,34){$\beta_1$}
\put(-13,34){$\beta_2$}
\put(-20,37){$\eta$}
\caption{Canonical symplectic basis.}
\label{basesym}
\end{center}
\end{figure}

The following lemma gives a canonical partial configuration for one-faced collections. 

\begin{lemma}\label{config} If $\gm$ is a one-faced collection on $\sg_2$ with dual unit ball in the cube $[-1,1]^4$, then there exists a diffeomorphism $\psi$ of $\sg_2$ such that $$i(\alpha_i,\psi(\gm))=i(\beta_i,\psi(\gm))=1; i=1,2.$$

Hence, up to diffeomorphism and outside $A_{\eta}$, $\gm$ looks like in Figure~\ref{pconfig}.  
\end{lemma}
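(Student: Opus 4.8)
The plan is to first convert the hypothesis on the dual unit ball into exact values of $N_{\gm}$ on the canonical basis, and then to realize those minimal values by a geometric symplectic basis that some diffeomorphism carries onto $\alpha_1,\beta_1,\alpha_2,\beta_2$.

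First I would unpack the containment $B_{N^*_{\gm}}\subset[-1,1]^4$. Identifying $H_1(\sg_2,\R)$ with $\R^4$ through the canonical basis $\{[\alpha_1],[\beta_1],[\alpha_2],[\beta_2]\}$, the $i$-th coordinate of a point of $B_{N^*_{\gm}}$ is its pairing with the basis vector $e_i$; since $\gm$ fills, $N_{\gm}$ is a genuine norm, so by biduality $\sup_{v\in B_{N^*_{\gm}}}\langle v,e_i\rangle=N_{\gm}(e_i)$. Hence the cube condition is exactly $N_{\gm}(e_i)\le 1$ for every $i$. As $N_{\gm}$ takes positive integer values on the lattice, also $N_{\gm}(e_i)\ge 1$, and combining the bounds gives
$$N_{\gm}([\alpha_i])=N_{\gm}([\beta_i])=1,\qquad i=1,2.$$

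Next I would produce, for each basis class $a$, a single simple closed curve meeting $\gm$ exactly once. Because $N_{\gm}(a)=1$, the infimum is attained by a $\gm$-minimizing representative which, as recalled just before Proposition~\ref{deh}, may be taken to be a simple multicurve $\alpha$ with $i_{\gm}(\alpha)=1$. Since $\gm$ fills, any component of $\alpha$ disjoint from $\gm$ lies in the complementary disk and is inessential; discarding those components leaves a single simple closed curve $c_a$ with $[c_a]=a$ and $i(c_a,\gm)=1$. Applying this to $a\in\{[\alpha_1],[\beta_1],[\alpha_2],[\beta_2]\}$ yields four simple closed curves $c_{\alpha_1},c_{\beta_1},c_{\alpha_2},c_{\beta_2}$, each crossing $\gm$ once, whose homology classes form the standard symplectic basis.

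Finally I would assemble these four curves into a geometric symplectic basis and invoke a change-of-coordinates argument. Their algebraic intersection numbers are already prescribed by the symplectic form on $H_1(\sg_2,\Z)$, namely $\langle c_{\alpha_i},c_{\beta_i}\rangle=\pm1$ with all other pairings zero. The heart of the argument, and the step I expect to be the main obstacle, is upgrading these algebraic equalities to geometric ones: choosing the $c_a$ pairwise in minimal position so that $i(c_{\alpha_i},c_{\beta_i})=1$, the remaining pairs are disjoint, and the two resulting one-holed tori are disjoint with annular complement, so that the family is an honest geometric symplectic basis rather than merely an algebraic one. I would control this by cutting $\sg_2$ open along $\gm$: the complement is a single disk, a $12$-gon since $|V|=3$ and $|E|=6$, and each $c_a$ becomes a properly embedded arc joining the two lifts of its unique crossing point with $\gm$; analysing how four such arcs can be made simultaneously disjoint or minimally crossing reduces the matter to a finite combinatorial verification. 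Once the $c_a$ are seen to form a geometric symplectic basis, the change-of-coordinates principle furnishes a diffeomorphism $\psi$ of $\sg_2$ with $\psi^{-1}(\alpha_i)\simeq c_{\alpha_i}$ and $\psi^{-1}(\beta_i)\simeq c_{\beta_i}$, whence $i(\alpha_i,\psi(\gm))=i(c_{\alpha_i},\gm)=1$ and likewise for $\beta_i$. Positioning the four arcs relative to $\eta$ then displays $\psi(\gm)$ in the normal form of Figure~\ref{pconfig} outside $A_{\eta}$.
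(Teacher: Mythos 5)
Your first two steps are sound and coincide with what the paper does: the containment $B_{N^*_{\gm}}\subset[-1,1]^4$ together with integrality and the filling hypothesis forces $N_{\gm}([\alpha_i])=N_{\gm}([\beta_i])=1$, and each such class is represented by a simple closed curve meeting $\gm$ exactly once (the inessential components of a simple minimizing multicurve lie in the complementary disk and can be discarded). The problem is your third step. You correctly identify that the whole content of the lemma is upgrading the four curves from an algebraic symplectic basis to a geometric one while preserving $i(c_a,\gm)=1$, you call it ``the main obstacle'' --- and then you do not do it. Saying that cutting along $\gm$ turns the question into arcs in a $12$-gon and that this ``reduces the matter to a finite combinatorial verification'' is not a proof: the verification ranges over all one-faced collections of $\sg_2$ with the cube property, which is exactly the set this lemma is meant to help enumerate, and you give no argument that the verification succeeds (i.e., that for every such $\gm$ the four arcs, whose endpoints are constrained to lie on the edges dual to the basis classes, can always be realized in the standard configuration). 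As written, the conclusion of the lemma is assumed rather than established.

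The paper closes this gap with an explicit surgery argument, carried out iteratively rather than simultaneously. Having normalized the first curve to $\alpha_1$ by change of coordinates, it takes a simple $\gm$-minimizing representative $\beta$ of $[\beta_1]$; since $i_a(\alpha_1,\beta)=1$, excess geometric intersections with $\alpha_1$ come in cancelling pairs and are removed by surgering $\beta$ along $\alpha_1$ (Figure~\ref{smoothing}), producing a simple curve $\beta'$ in the same class with $i(\beta',\alpha_1)=1$ and still $i(\beta',\gm)=1$; one then normalizes $\beta'$ to $\beta_1$ and repeats for $[\alpha_2]$ and $[\beta_2]$, where the algebraic intersection with $\alpha_1\cup\beta_1$ is $0$ and surgery makes the geometric intersection $0$. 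If you want to salvage your approach, you either need to carry out this kind of surgery argument yourself, or actually perform and justify the combinatorial analysis in the cut-open polygon; in its current form the proposal stops exactly where the proof begins.
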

\begin{figure}[h!!]
\begin{center}
\includegraphics[scale=0.17]{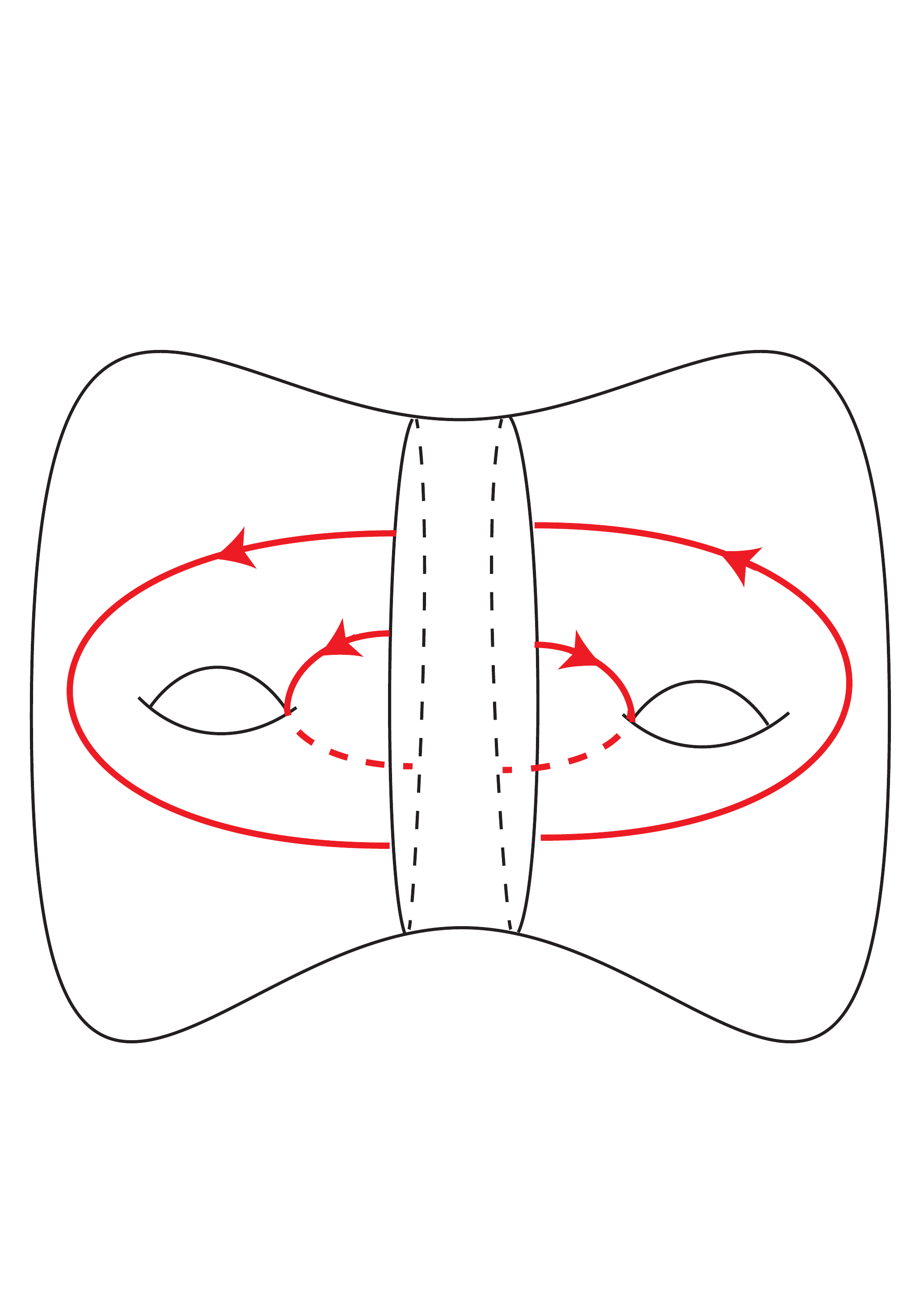}
\put(-24,27){$a_1$}
\put(-34,28){$b_1$}
\put(-14,27){$a_2$}
\put(-5,28){$b_2$}
\caption{Partial configuration of the collection $\psi(\gm)$; with labelled arcs.}
\label{pconfig}
\end{center}
\end{figure}

\begin{proof}
Since $\gm$ is one-faced with dual unit ball in $[-1,1]^4$ then $$N_{\gm}(a_i)=N_{\gm}(b_i)=1$$
where $a_i$, $b_i$ is the symplectic basis of $H_1(\sg_g,\R)$.
Now, as $N_{\gm}(a_i)=N_{\gm}(b_i)=1$, there is an oriented simple closed  curve $\alpha$ such that $$i(\alpha,\gm)=1$$ and $$[\alpha]=a_1.$$

Up to diffeomorphism, we can take $\alpha=\alpha_1$. 

Now, let $\beta$ be the $\gm$-minimizing simple curve in the homology class of $b_1$, then
$$i_a(\alpha_1,\beta)=1$$ and $$i(\gm,\beta)=1.$$
Therefore, one can make a surgery on $\beta$ along $\alpha$ (See Figure \ref{smoothing}) to get a new curve $\beta'$ such that $\beta'$ is a simple $\gm$-minimizing curve in the same homology class with $\beta$ and such that $$i(\beta',\alpha)=i(\beta',\gm)=1.$$
Up to diffeomorphism, we can take $\beta'=\beta_1$.

If $\alpha$ and $\beta$ are $\gm$-minimizing in the homology classes of $\alpha_2$ and $\beta_2$ respectively, we have 

$$i_a(\alpha,\alpha_1\cup\beta_1)=i_a(\beta,\alpha_1\cup\beta_1)=0.$$

Again, by performing surgery on $\alpha$ and $\beta$, we get $\alpha'$ and $\beta'$ such that
$$i(\alpha',\alpha_1\cup\beta_1)=i(\beta',\alpha_1\cup\beta_1)=0$$ and $$i(\alpha',\beta')=1.$$

Then, up to diffeomorphism $\alpha'=\alpha_2$ and $\beta'=\beta_2$.\\
This prove that up to diffeomorphism, $(\alpha_1,\beta_1,\alpha_2,\beta_2)$ are $\gm$-minimizing.
\end{proof}

\begin{figure}[htbp]
\begin{center}
\includegraphics[scale=0.15]{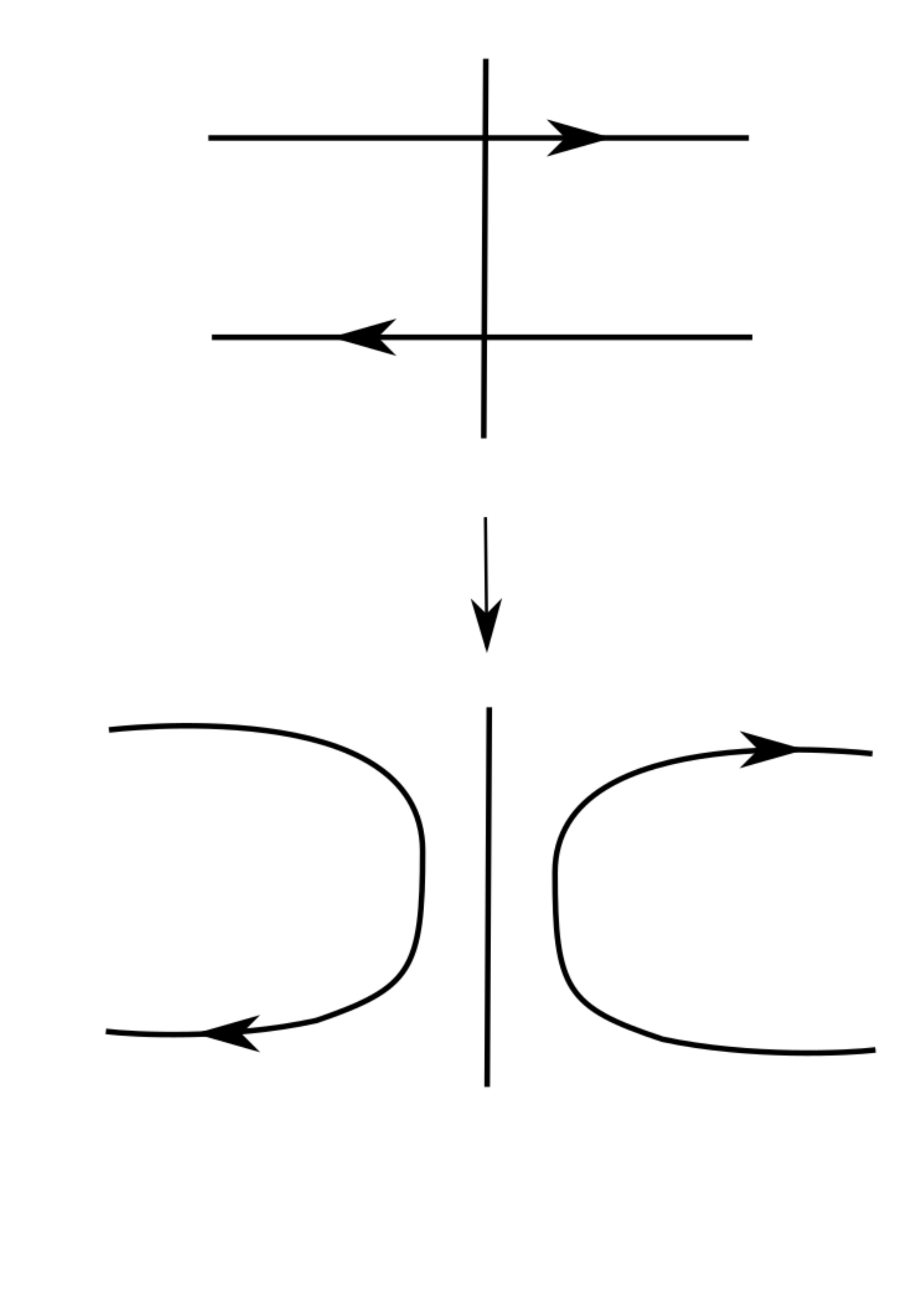}
\caption{Surgery along the vertical curve.}
\label{smoothing}
\end{center}
\end{figure}

\begin{remark} Lemma~\ref{config} remains true on a genus $g$ surface and the proof is the same.
\end{remark}
Lemma~\ref{config}  implies that, up to diffeomorphism,  a one-faced collection with dual unit ball in the cube $[-1,1]^4$ is obtained by connecting the extremities of the partial configuration by arcs in the annulus $A_{\eta}$. Moreover, the self-intersection number of $\gm$ is determined by the intersection between those arcs we used to complete the partial configuration. 

Let $a_1, b_1, a_2$ and $b_2$ be the four oriented arcs in the partial configuration (see Figure ~\ref{pconfig}). A closed curve from the partial configuration will be labelled by the arcs being used and the number of twists we make around $\eta$ when we walk along that curve. For instance, $a_1\eta^{2}b_1^{-1}b_2$ is the closed curve depicted on Figure~\ref{exmpcurv}. 
\begin{figure}[htbp]
\begin{center}
\includegraphics[scale=0.2]{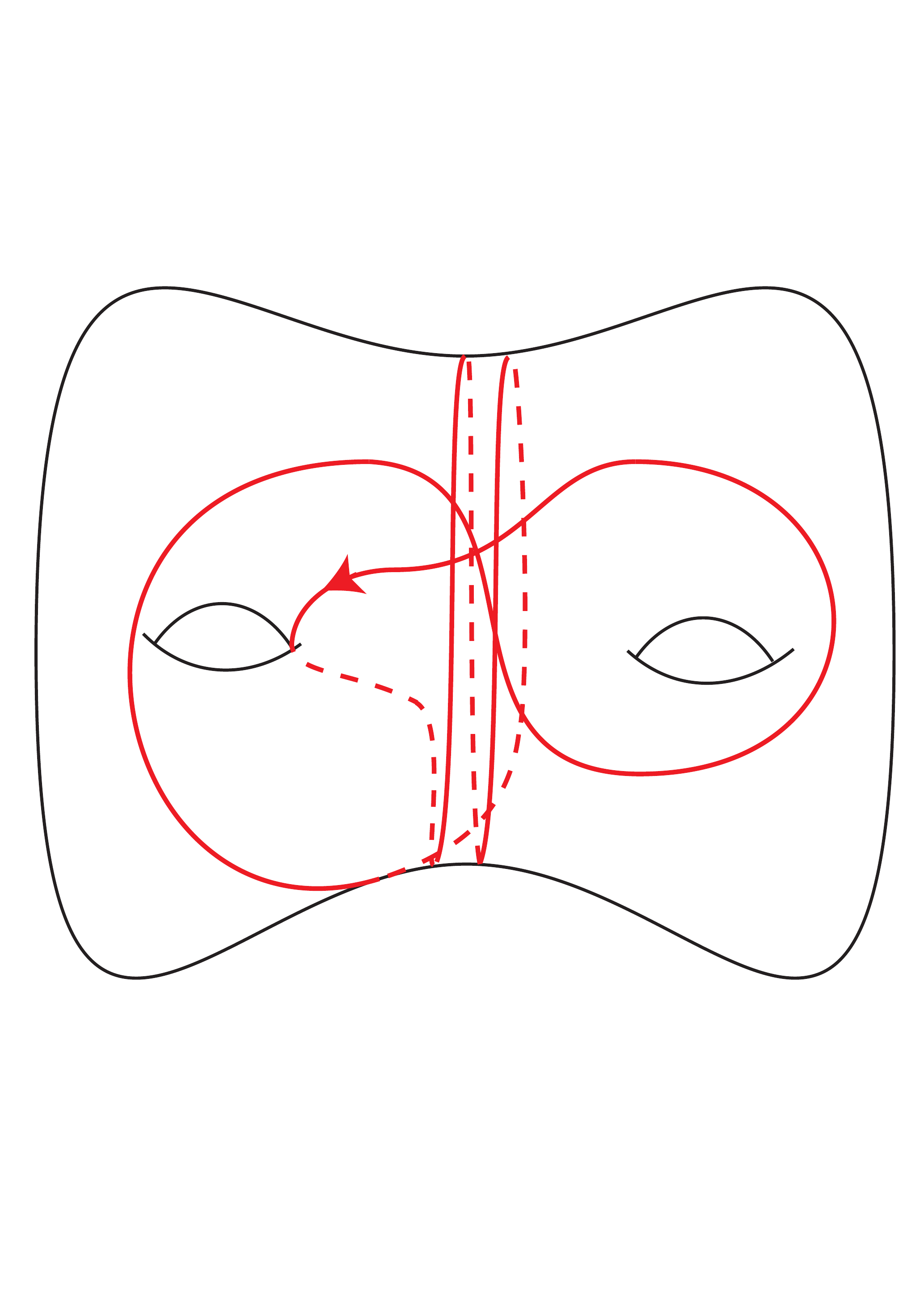}
\caption{The curve $a_1\eta^2b_1^{-1}b_2$}
\label{exmpcurv}
\end{center}
\end{figure}

As we are dealing with non oriented curves, the labeling of curves is defined up to cyclic permutation and reversing. For example, $a_1\eta^2b_1^{-1}b_2$ and $a_1^{-1}b_2^{-1}b_1\eta^{-2}$ are labels of the same curve. 
\end{paragraph} 
 
\begin{paragraph}{Intersection of arcs in an annulus:} As we said above, the geometric intersection of a one-faced collection is completely determined by the intersection of arcs in an annulus. Here, the intersection number is computed over the homotopy class of arcs with fixed end points. Now, let $\lambda$ be a simple oriented arc joining the two boundaries of $A$. Cutting along $\lambda$, we obtain a rectangle with two opposite sides identified. Let $X$ and $Y$ be two points in the boundary components of $A$. An oriented arc from $X$ to $Y$ will be denoted by $\overset{\rightarrow}{XY_p}$ where $p\in\Z$ is the algebraic intersection between $\overset{\rightarrow}{XY_p}$ and $\lambda$. 

\begin{figure}[htbp]
\begin{center}
\includegraphics[scale=0.17]{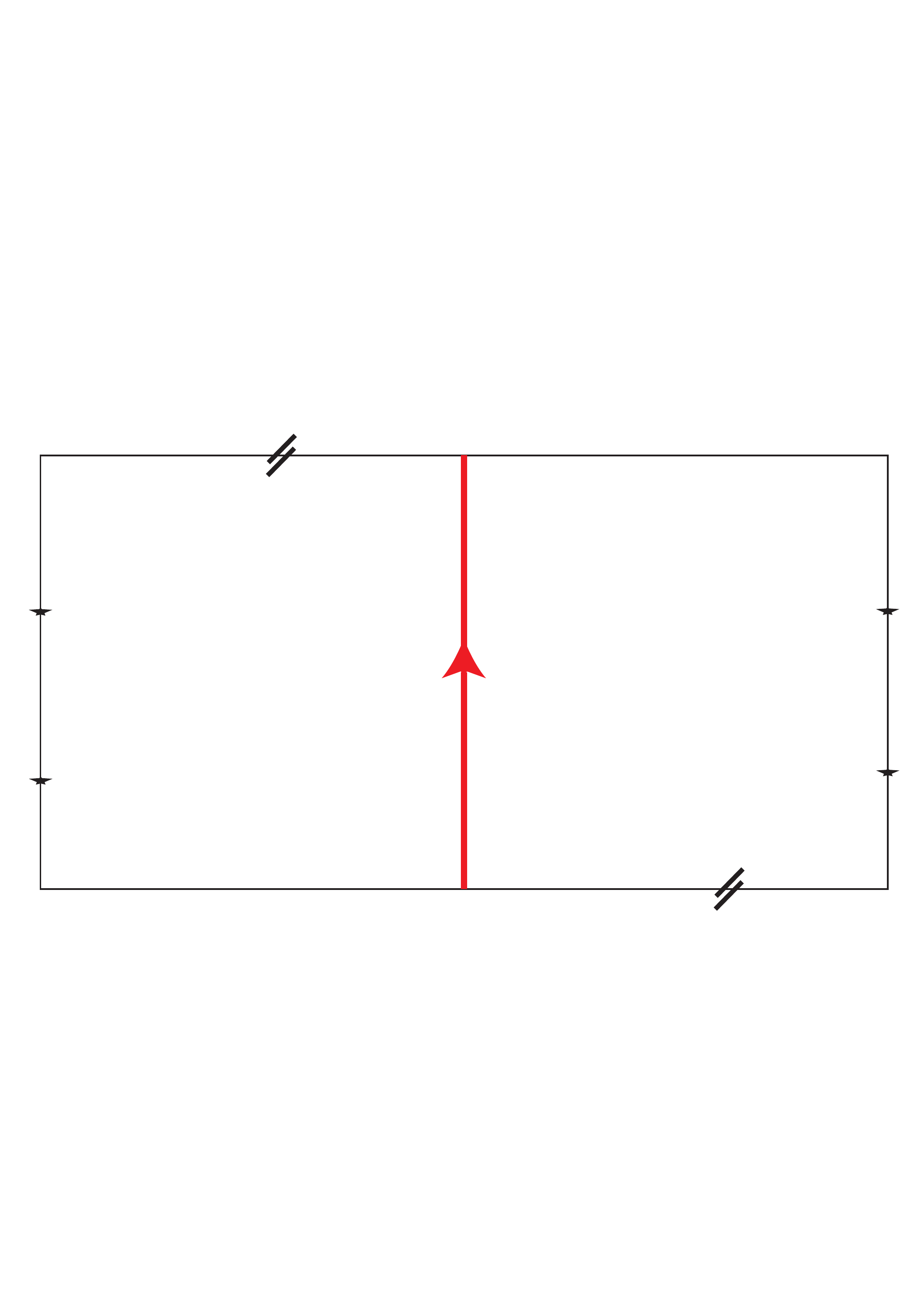}
\put(-37,26){\tiny{A}}
\put(-37,20){\tiny{C}}
\put(0,26){\tiny{B}}
\put(0,20){\tiny{D}}
\put(-16,23){$\eta$}

\caption{End-points in annulus.}
\label{anl}
\end{center}
\end{figure}

Let $A, B, C$ and $D$ four points in the boundaries of $A$ as in Figure~\ref{anl}.

\begin {lemma}\label{intersection}
The following formulas give the intersection between two oriented arcs in $A$:
\begin{itemize}
\item $i(\overset{\rightarrow}{AB}_p,\overset{\rightarrow}{CD}_q)=i(\overset{\rightarrow}{BA}_p,\overset{\rightarrow}{DC}_q)=|p-q|$
\item $i(\overset{\rightarrow}{AB}_p,\overset{\rightarrow}{DC}_q)=i(\overset{\rightarrow}{BA}_p,\overset{\rightarrow}{CD}_q)=|p+q|$
\item $i(\overset{\rightarrow}{AD}_p,\overset{\rightarrow}{CB}_q)=i(\overset{\rightarrow}{DA}_p,\overset{\rightarrow}{BC}_q)=|p-q-1|$
\item $i(\overset{\rightarrow}{AD}_p,\overset{\rightarrow}{BC}_q)=|p+q-1|$
\item $i(\overset{\rightarrow}{DA}_p,\overset{\rightarrow}{CB}_q)=|q+p+1|$
\end{itemize}
\end{lemma}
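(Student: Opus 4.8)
The plan is to compute every geometric intersection number by passing to the universal cover of the annulus $A$ and counting, among the $\Z$-translates of a fixed lift of the second arc, how many must cross a fixed lift of the first. I would work in the flat model $A=(\R/\Z)\times[0,1]$, with boundary components $\partial_0=(\R/\Z)\times\{0\}$ and $\partial_1=(\R/\Z)\times\{1\}$, and take $\lambda=\{0\}\times[0,1]$ as the cross-cut. The universal cover is the strip $\widetilde A=\R\times[0,1]$, the deck group is generated by $T(x,t)=(x+1,t)$, and the lifts of $\lambda$ are exactly the vertical lines $\{n\}\times[0,1]$. Reading Figure~\ref{anl}, the points $A,C$ lie on one boundary and $B,D$ on the other; I place their preferred lifts at horizontal coordinates $a,c\in(0,1)$ on $\partial_0$ and $b,d\in(0,1)$ on $\partial_1$, with $a>c$ and $b>d$ as drawn, and with $\lambda$ to one side of all four points.

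The key translation step is that, the strip being simply connected, an arc $\overset{\rightarrow}{XY}_p$ is determined up to homotopy rel endpoints by the lift of its initial and terminal points, and that $p$, the algebraic intersection with $\lambda$, equals the net number of vertical lines crossed, hence the horizontal shift of the terminal lift. Thus $\overset{\rightarrow}{XY}_p$ lifts to the straight segment from $\widetilde X$ to $\widetilde Y+(p,0)$: a monotone segment with endpoints in $(0,1)$ and in $(p,p+1)$ meets exactly the lines $\{1\},\dots,\{p\}$ (resp. $\{0\},\dots,\{p+1\}$ for $p<0$), all with the same sign, which I would record as the normalisation lemma underlying all five formulas.

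I would then invoke the standard principle that straight segments realise minimal position: two distinct straight segments meet at most once and bound no bigon, so there are no excess crossings. Fixing one lift $L_1$ of the first arc, each intersection point downstairs has a unique preimage on $L_1$, whence $i(\cdot,\cdot)$ equals the number of integers $n$ for which $L_1$ crosses $T^nL_2$. Two straight segments joining $\partial_0$ to $\partial_1$ cross precisely when the order of their endpoints is reversed between the two boundaries, i.e. when $n$ lies strictly between two explicit reals. For $\overset{\rightarrow}{AB}_p$ against $\overset{\rightarrow}{CD}_q$ the condition is that $n$ lie strictly between $a-c$ and $(b-d)+(p-q)$; for $\overset{\rightarrow}{AD}_p$ against $\overset{\rightarrow}{CB}_q$, that $n$ lie strictly between $a-c$ and $(d-b)+(p-q)$; the other cases are obtained similarly by reversing orientations. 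Each of the five statements thus reduces to counting the integers in an open interval whose length is $|p\pm q+c|$ up to fractional offsets in $(0,1)$.

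The remaining work, and the main obstacle, is the bookkeeping of these offsets so that the count comes out exactly $|p\pm q+c|$ rather than off by one. The sign pattern $p-q$ versus $p+q$ is dictated by the relative orientations of the two arcs, which determine whether $q$ enters the top- or the bottom-boundary comparison, hence with a $+$ or a $-$ sign; the additive constant $0,-1,+1$ is produced by whether the offset entering the interval is $b-d=w\in(0,1)$ or $d-b=-w\in(-1,0)$, that is, by the cyclic positions of $A,B,C,D$ relative to $\lambda$. Since $a-c=u$ and $b-d=w$ both lie in $(0,1)$, a case check identifying the smallest integer strictly above the lower bound and the largest strictly below the upper bound settles every count unambiguously, and this is exactly where the assumptions $a>c$, $b>d$ and the placement of $\lambda$ to the side of the four points are used. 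I expect the only delicate points to be the extreme values of $p\pm q$ (for instance $p+q=-1$ in the last formula, where the two bounding reals may switch order according to whether $u+w\gtrless 1$ yet the count is $0$ in both subcases), which I would check individually.
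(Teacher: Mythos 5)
Your proof is correct in substance but follows a genuinely different route from the paper's. The paper argues by naturality: it applies the Dehn twist $\tau_{\eta}^{-q}$ to untwist the second arc, reducing every formula to the base case $q=0$, and then simply asserts the base counts $i(\overset{\rightarrow}{AB}_{p'},\overset{\rightarrow}{CD})=|p'|$ and $i(\overset{\rightarrow}{AD}_{p'},\overset{\rightarrow}{CB})=|p'-1|$, attributing the extra $-1$ to the crossed position of the endpoints. You instead lift to the universal cover, realise each arc as a straight segment whose terminal lift is shifted by $p$, justify minimal position (straight segments bound no bigons, and a bigon downstairs would lift), and count the deck translates $T^{n}L_{2}$ whose endpoints separate those of $L_{1}$. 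Your route is longer but actually proves the base case the paper only asserts, and the bookkeeping you defer does close up: with $u=a-c\in(0,1)$ and $w=b-d\in(0,1)$, the integers strictly between $u$ and $w+(p-q)$ number exactly $|p-q|$, and those between $u$ and $-w+(p-q)$ number exactly $|p-q-1|$, which is precisely where the constants $0$ and $\pm1$ come from. One warning about your closing step ``the other cases are obtained by reversing orientations'': since the subscript is an algebraic intersection number with the oriented arc $\lambda$, reversal acts by $\overset{\rightarrow}{XY}_{p}\mapsto\overset{\rightarrow}{YX}_{-p}$, so carried out correctly it yields $i(\overset{\rightarrow}{DA}_{p},\overset{\rightarrow}{BC}_{q})=i(\overset{\rightarrow}{AD}_{-p},\overset{\rightarrow}{CB}_{-q})=|q-p-1|$, which agrees with the second expression in the third bullet only after exchanging $p$ and $q$. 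Your method, pushed to the end, will surface this discrepancy; you should state explicitly which convention you adopt rather than copying the pairing as printed, and the same care is needed to confirm that the fourth and fifth formulas (which are mutually consistent under reversal) come out with the signs claimed.
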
 

\begin{proof}
Up to the Dehn twist $\tau_{\eta}^{-q}$ on the configuration of the arcs, one can assume that $q$ is equal to $0$ in all cases, that is one the arc is untwisted.

Therefore, we have:

$$i(\overset{\rightarrow}{AB}_p,\overset{\rightarrow}{CD}_q)=i(\overset{\rightarrow}{AB_{p'}},\overset{\rightarrow}{CD})= |p'|$$
with $$\overset{\rightarrow}{AB}_{p'}=\tau_{\eta}^{-q}(\overset{\rightarrow}{AB_p}).$$ Moreover, $p'=p-q$. Hence, we obtain the result.\vspace{0,5cm}

Again, for the second formula, we have:
$$i(\overset{\rightarrow}{AB}_p,\overset{\rightarrow}{DC}_q)=i(\overset{\rightarrow}{AB_{p'}},\overset{\rightarrow}{CD})=|p'|$$ and $p'=p+q$.
The difference between the first two cases show how crucial is the orientation for the computing of intersection. 

We treat the third case, the other are done in a similar way.\\ 
We still have that  $$i(\overset{\rightarrow}{AD}_p,\overset{\rightarrow}{CB}_q)=i(\overset{\rightarrow}{AD_{p'}},\overset{\rightarrow}{CB})= |p'-1|$$ and $$p'=p-q.$$
The appearance of $-1$ in this case comes from the cross configuration of the extremities.
\end{proof} 
\end{paragraph} 

\begin{paragraph}{List of one-faced collections with dual unit ball in the cube $[-1,1]^4$\string:} Now, we are able to count all one-faced collections whose dual unit ball is a sub polytope of the cube $[-1,1]^4$. Before that, we define some diffeomorphisms which will be useful for the proof.

If $\gamma$ is an oriented simple closed curve on $\sg_2$, we recall that $\tau_{\gamma}$ is the right-handed Dehn twist along $\gamma$.

\begin{figure}[h!]
\begin{center}
\includegraphics[scale=0.2]{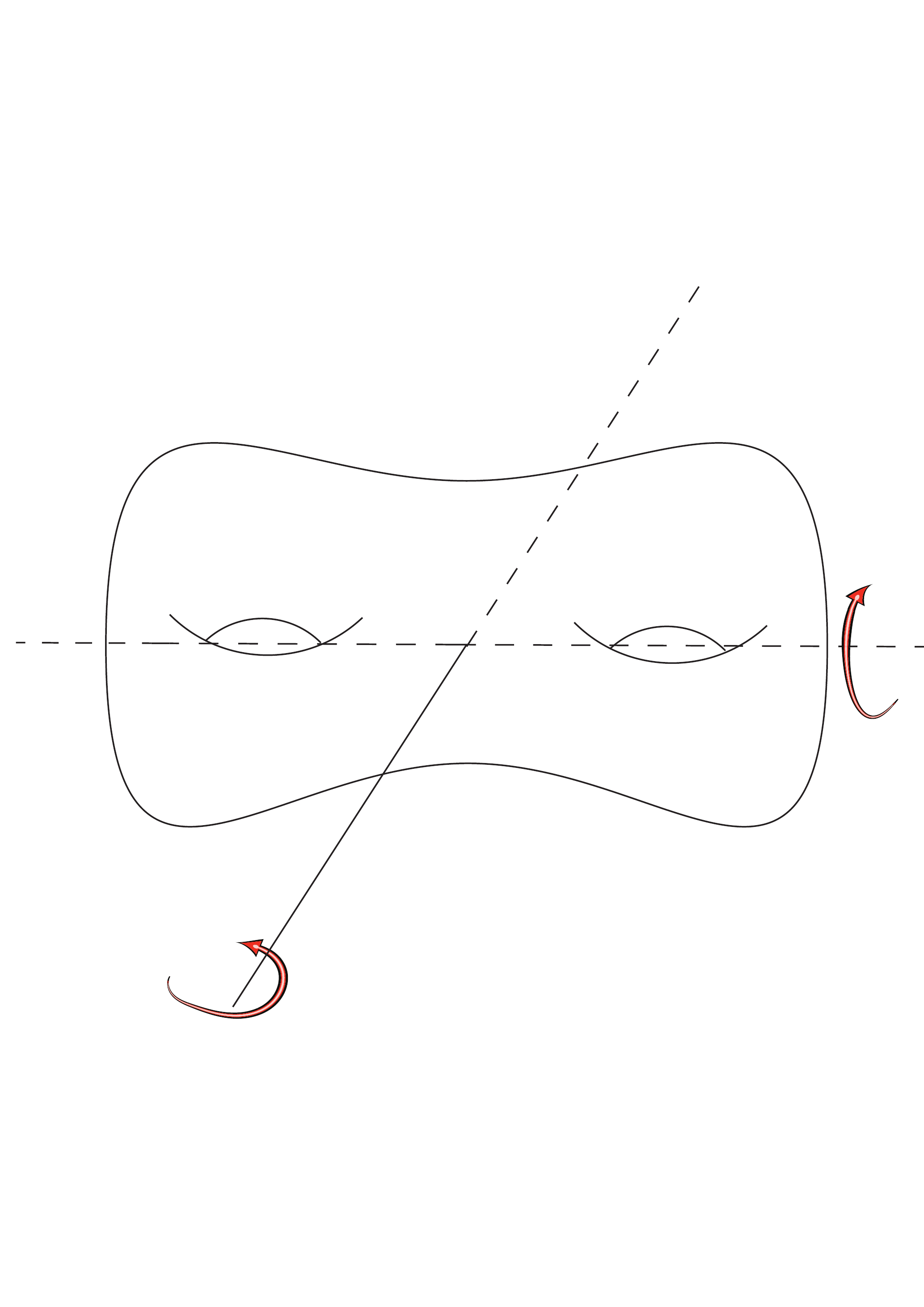}
\put(-14,45){$\mathcal{D}$}
\caption{Rotations $R_1$ and $R_2$}
\label{rot}
\end{center}
\end{figure}

Let $R_1$ (respectively $R_2$) be the rotation of angle $\pi$ along the axis $\mathcal{D}$ (respectively the horizontal axis) as depicted in Figure~\ref{rot}. The diffeomorphism ~$R_1$  (respectively $R_2$) is an involution and it maps $\alpha_1$ to $\alpha_2$, $\beta_1$ to $\beta_2$ and $\eta$ to ~$\eta^{-1}$ (respectively $\alpha_i$ to $\alpha_i^{-1}$, $\beta_i$ to $\beta_i^{-1}$ and $\eta$ to $\eta$).

We recall that $\alpha_i$ and $\beta_i$ can be interchanged by a diffeomorphism. More precisely, there is a diffeomorphism sending $\alpha_i$ to $\beta_i$ and $\beta_i$ to $\alpha_i^{-1}$. This fact implies that in the writing of the label of the curves, $a_i$ can be replaced by $b_i$ and $b_i$ by $a_i^{-1}$; we call this operation \textit{\textbf{interchanging}}.

\begin{definition}
Let  $\gm$ be a collection of closed curves on $\sg_2$. A cycle $\gamma$ in $\gm$ ($\gm$ seen as graph on $\sg_2$) is separating if $\sg_2-\gamma$ has more than one component.
\end{definition}

The following lemma gives a necessary condition for a collection to be one-faced.
\begin{lemma}\label{cycle}
If $\gm$ is a one-faced collection, then $\gm$ does not contain a separating cycle.
\end{lemma}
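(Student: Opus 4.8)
The plan is to argue by contradiction, exploiting the defining feature of a one-faced collection: its complement is a single open disk. So suppose $\gm$ is one-faced and yet contains a separating cycle $\gamma$. Since $\gamma$ is a subgraph of $\gm$ we have $\gamma\subset\gm$, and hence the unique face $D:=\sg_2-\gm$ satisfies $D\subset\sg_2-\gamma$. By hypothesis $\sg_2-\gamma$ is disconnected, so I would write $\sg_2-\gamma=S_1\sqcup S_2$ with $S_1,S_2$ nonempty and open (components of an open subset of a manifold are open).

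The key observation is that $D$ is connected, being homeomorphic to an open disk. A connected subset of $\sg_2-\gamma$ lies inside a single connected component, so, up to relabeling, $D\subset S_1$. This is the heart of the argument: the entire two-dimensional part of the surface that is not covered by the graph is confined to one side of $\gamma$.

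It then remains to derive a contradiction from the fact that $S_2$ is \emph{also} a nonempty open region. Since $\sg_2=\gm\sqcup D$ as sets and $D\cap S_2=\emptyset$, every point of $S_2$ must lie on the graph, i.e.\ $S_2\subset\gm$. But $\gm$ is a $1$-dimensional complex and therefore nowhere dense, with empty interior in $\sg_2$, whereas $S_2$ is a nonempty open set; this is impossible. Hence no separating cycle can exist, which proves the lemma.

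I expect the only delicate point to be bookkeeping the distinction between the cycle $\gamma$ and the full collection $\gm$: the separation is carried out by the proper subgraph $\gamma$, while the single disk is the complement of \emph{all} of $\gm$, so one must check that the component $S_2$ receiving none of the disk genuinely has nonempty interior rather than being accounted for by leftover edges of $\gm-\gamma$. Making the inclusion $S_2\subset\gm$ precise and invoking that a graph is nowhere dense resolves this subtlety and closes the argument; no Euler-characteristic or homological input is needed.
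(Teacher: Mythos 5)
Your proof is correct and follows essentially the same route as the paper: the paper's (very terse) argument likewise observes that a separating cycle $\gamma\subset\gm$ forces the complement $\sg_2-\gm$ to meet more than one component of $\sg_2-\gamma$, hence to have more than one face. You merely make explicit the step the paper leaves implicit, namely that the component not containing the single disk would have to lie inside the $1$-dimensional graph $\gm$, which is impossible since that component is open and nonempty.
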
   
\begin{proof}
Assume that $\gm$ contain a separating cycle $\gamma$, then $\sg_2-\gamma$ has at least two connected components. We have in this case more than one disc in the complement.
So if $\gm$ is one-faced, it does not contain a separating cycle.  
\end{proof}

Now, we can state the main result of this section which is an elaborate form of Theorem \ref{counting}.
  
  \begin{thm}[Orbits of one-faced collections]\label{countfin}
If $\gm$ is a one-faced collection on ~$\sg_2$ with dual unit ball in the cube $[-1,1]^4$, then $\gm$ has at most three closed curve. Moreover, up to diffeomorphism, 
\begin{itemize}
\item if $\gm$ is made of three closed curves, then $\gm=\{a_1,a_2,b_1b_2^{-1}\}$
\item if $\gm$ is made of two closed curves, then $$\gm=\{a_1a_2^{-1}, b_1b_2\eta\}\hspace{0,2cm} \rm{or}\hspace{0,2cm} \gm=~\{a_1, b_1b_2\eta a_2\}$$
\item if $\gm$ is made by one closed curve, then $\gm=\{a_1a_2^{-1}b_1^{-1}b_2\eta\}$
\end{itemize} 
  \end{thm}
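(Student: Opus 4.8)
The plan is to turn the classification into a finite combinatorial problem inside the annulus $A_{\eta}$, using Lemma~\ref{config} as the starting point. By that lemma, after applying a suitable diffeomorphism we may assume that $\gm$ agrees, outside $A_{\eta}$, with the fixed partial configuration of Figure~\ref{pconfig}: two arcs $a_1,b_1$ in the left handle and two arcs $a_2,b_2$ in the right handle, whose eight endpoints lie on the two boundary circles of $A_{\eta}$ (four on each). Completing $\gm$ then amounts to choosing a system of arcs inside $A_{\eta}$ that pairs up these eight endpoints. Since a one-faced collection on $\sg_2$ has exactly $|V|=2g-1=3$ self-intersection points, and since the four partial-configuration arcs can be taken pairwise disjoint and lie outside $A_{\eta}$, no double point can involve a partial-configuration arc; every double point is therefore a crossing of two completing arcs, and the \emph{total} intersection number of the completing arcs inside $A_{\eta}$ must equal $3$.

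First I would bound the number of closed curves. By Lemma~\ref{cycle} a one-faced collection has no separating cycle, and in particular no component can be disjoint from the union of the others (such a component would appear as a full boundary circle of the complementary disk, forcing $\gm$ to be a single curve bounding a disk). Hence the intersection graph on the components is connected, so the three inter-curve and self-crossings must connect all components; a count of the three double points then shows that there are at most four components, and the filling (single-face) condition excludes the four-component case, since that case forces four simple closed curves confined one-to-each-handle which fail to fill $\sg_2$. This yields the bound ``at most three closed curves'' and splits the analysis into $c=1,2,3$.

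Next, for each value of $c$ I would enumerate the pairings of the eight endpoints by completing arcs that produce exactly $c$ curves, recording for each completing arc a twisting integer equal to its algebraic intersection with a fixed reference arc $\lambda$ cutting $A_{\eta}$. Using the formulas of Lemma~\ref{intersection} I would write the total self-intersection as a function of these integers and impose that it equal $3$, which leaves only finitely many values. I would then reduce the surviving configurations by the available symmetries: the Dehn twist $\tau_{\eta}$, which (exactly as in the proof of Lemma~\ref{intersection}) lets one normalize one twist to $0$, the rotations $R_1,R_2$, and the interchanging operation $a_i\leftrightarrow b_i$. Finally, for each remaining configuration I would read off the labels of the resulting curves and match them against the four normal forms $\{a_1,a_2,b_1b_2^{-1}\}$, $\{a_1a_2^{-1},b_1b_2\eta\}$, $\{a_1,b_1b_2\eta a_2\}$ and $\{a_1a_2^{-1}b_1^{-1}b_2\eta\}$.

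The step I expect to be the main obstacle is the bookkeeping of the one-faced condition. Having exactly three double points and the correct total self-intersection does not by itself guarantee that the complement is a single disk; for each candidate one must trace the boundary word of the complementary region and confirm it is a single disk rather than two. This face-counting, combined with the need to enumerate the endpoint pairings exhaustively without omitting cases or counting diffeomorphic ones twice, is where the care lies; by contrast the intersection computations are routine applications of Lemma~\ref{intersection}.
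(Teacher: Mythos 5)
Your strategy is precisely the one the paper follows: reduce to the partial configuration of Lemma~\ref{config}, observe that all the double points come from the completing arcs in $A_{\eta}$, bound the number of components, then classify by solving $i(\gm,\gm)=3$ with the formulas of Lemma~\ref{intersection} and reducing by $\tau_{\eta}$, $R_1$, $R_2$ and interchanging, and finally check one-facedness of the survivors. The problem is that for a classification statement the proof \emph{is} the enumeration, and your proposal stops exactly where the paper's proof begins: you never list the possible endpoint pairings for $c=1,2,3$, never write down the resulting intersection equations, never solve them, and never perform the disk-count that you yourself single out as the delicate step. The paper's Cases 1--3 consist of exactly this bookkeeping (for $c=3$, the reduction to $\{a_1,a_2,b_1b_2^{-1}\eta^{q}\}$ and the equation $|q+1|=1$; for $c=2$, the argument that one of the two curves must be simple and the exclusion of $\varepsilon=-1$ because then $i(b_1b_2,\gm)=0$; for $c=1$, the equation $|p|+|q|+|r|+|p+q+1|+|p-r|+|q+r+1|=3$). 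As written, your text is a correct plan, not a proof.

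Two smaller points on the parts you do argue. First, your justification for connectivity of the component-intersection graph (``such a component would appear as a full boundary circle of the complementary disk, forcing $\gm$ to be a single curve bounding a disk'') does not work as stated; the clean argument is that the boundary of the unique complementary disk surjects onto $\gm$, so $\gm$ is connected as a graph and the components are linked through crossings. Second, your exclusion of the four-component case (``four simple closed curves confined one-to-each-handle'') is loose: a completing arc with both endpoints on the same boundary circle of $A_{\eta}$ may wind around the annulus, so such a curve need not sit inside a handle; the correct point, which is the one the paper uses, is that each such arc can be homotoped off the core of the annulus, whence $i(\gm,\eta)=0$, contradicting the filling hypothesis. (The paper in fact obtains the bound $|\gm|\leq 3$ directly from this contradiction, without passing through the intersection-graph count.)
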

  \begin{proof}
  If $\gm$ is one-faced,  then $i(\gm,\gm)=3$ (its comes from an Euler characteristic argument; cf Section \ref{sect2}). 

Now, if $\gm$ has at least four closed curves, then the arcs $a_i, b_i (i=1, 2)$ belong to four different closed curves  $\alpha_i\eta^{p_i}, \beta_i\eta^{q_i}$; otherwise $\gm$ would contain a separating cycle. Therefore,  $i(\gm,\eta)=0$ which is absurd as $\gm$ is filling. So, if $\gm$ is one-faced $|\gm|\leq 3$.\vspace{0,5cm} 

\textbf{Case $1$:} If $|\gm|=3$, then two arcs of the partial configuration belong to the same closed curve and the others two belong to two different closed curves. Moreover, as $\gm$ is filling, the two arcs containing in the same closed curve are in different handles. As one can interchange $a_i$ and $b_i$, we can assume that the curve containing two arcs is $\gamma:=b_1\eta^pb_2^{-1}\eta^q$; the other curves being $\alpha_1\eta^r$ and $\alpha_2\eta^s$. Since $\gm$ is one-faced, it does not contain a separating cycle that is $r=s=0$, and up to a Dehn twist along $\eta$, one can take p=0 that is  $\gamma=b_1b_2^{-1}\eta^q$. The fact that $i(\gm,\gm)=3$ implies that $$i(\gamma,\gamma)=1.$$ By Lemma~\ref{intersection} $i(\gamma,\gamma)=|q+1|=1$; it implies that $q=0$ or $q=-2$ and one check that $\gm_1=\{a_1,a_2,a_1a_2^{-1}\}$ and  $\gm_2=\{a_1,a_2,b_1b_2^{-1}\eta^{-2}\}$ are in the same orbit under the mapping group action.
\begin{figure}[htbp]
\begin{center}
\includegraphics[scale=0.2]{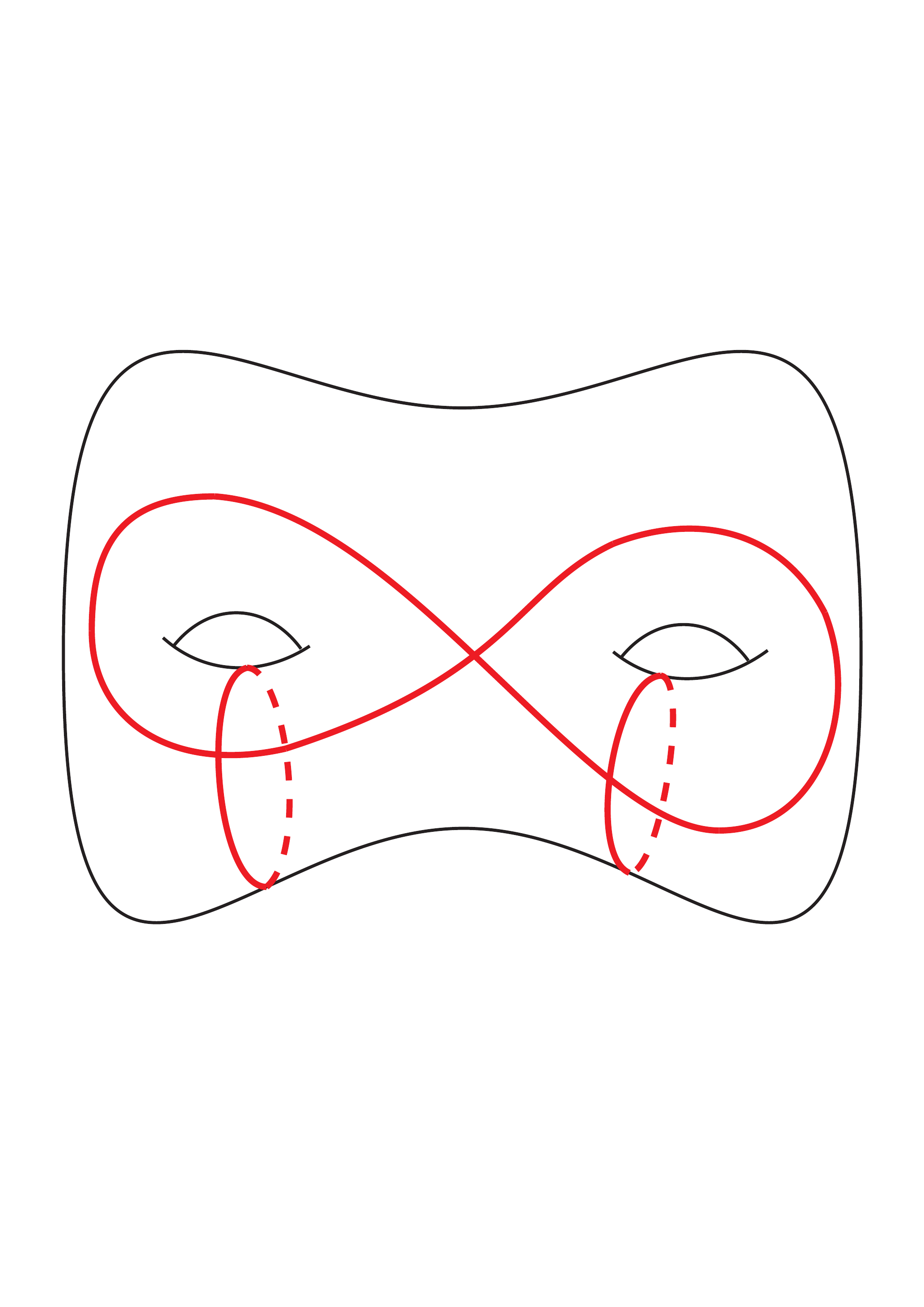}
\caption{One-faced collection with three curves}
\label{threecurv}
\end{center}
\end{figure}

\textbf{Case 2:} If $|\gm|=2$, then one of the curves of $\gm$ is simple. Otherwise if the two curves are not simple, one can smooth intersection point of one of the curves in $\gm$ --let $g_1$ be that curve-- such that each smoothing separate $g_1$ in to two component. we obtain at least two simple curves $\lambda_j$, $j=1,..,n$. The curves $\lambda_j$, as they are all parallels to $g_1$,  intersects $g_2$. Then $$i(\gm,\gm)\geq i(g_1,g_1)+i(g_2,g_2)+ \displaystyle{\sum_j{i(\lambda_j,g_2)}}>3$$
which is absurd since $\gm$ is one-faced. Therefore, one of the two curves is simple, say $g_1$. Moreover, we know from \cite{Aoug} that $\sg_2$ does not admit a filling pairs (i.e a one faced-collection making of two simple closed curves), so the other one is non simple.

Up to diffeomorphism (interchanging and rotations), one can assume that $a_1$ is contained in $g_1$. 

\textbf{Case 2.1:} If $g_1$ does not contain another arc, then $$g_2=b_1b_2\eta^pa^{\varepsilon}$$ with $\varepsilon=\pm1$. In this case, 
$$i(\gm,\gm)=i(g_1,g_2)+i(g_2,g_2)$$
 and  
 $$i(g_1,g_2)=1.$$ Its implies that $i(g_2,g_2)=2$. The solution of this equation is $p=1$ \\and $\varepsilon=1$.

So $$\gm=\{a_1,b_1b_2\eta a_2\}$$ which indeed is a one-faced collection(see Figure \ref{twocurv}).\vspace{0,5cm} 

\textbf{Case 2.2:} If $g_1$ contains another arc than $a_1$, that arc cannot be in the same handle as $a_1$ (otherwise, the filling condition would fail). Up to interchanging, one can suppose that $$g_1=a_1\eta^pa_2^{-1}\eta^{-p}$$ and again by applying a Dehn twist around ~$\eta$, one can take $g_1=a_1a_2^{-1}$ and  $g_2=b_1\eta^pb_2^{\varepsilon}\eta^q$ with $\varepsilon=\pm1$. Moreover, $$i(\gm,\gm)=i(g_1,g_2)+ i(g_2,g_2).$$

We have $i(\alpha_1\cup\alpha_2,\beta_1\cup\beta_2)\equiv i(g_1,g_2) \rm{mod}\hspace{0,1cm} 2$ since $\alpha_1\cup\alpha_2$ (respectively $\beta_1\cup\beta_2$) is homologous to $g_1$ (respectively $g_2$). It implies that $$i(g_1,g_2)=2$$ and, $$i(g_1,g_2)=1.$$

\textbf{Case 2.2.1:} If $\varepsilon=-1$, by applying the formulas of Lemma~\ref{intersection}, we have: $$i(g_2,g_2)=|p+q+1|$$ and $$i(g_1,g_2)=|p|+|q|+|q+1|+|p+1|.$$

The solution of the equations $i(g_2,g_2)=1$ and $i(g_1,g_2)=2$ are $\{p=0,q=0\}$ and $\{p=-1,q=-1\}$. The two collections obtained are not filling since $i(b_1b_2,\gm)=0$.\vspace{0.5cm} 

\textbf{Case 2.2.2:} If $\varepsilon=1$, then $i(g_2,g_2)=|p-q|$ and $i(g_1,g_2)=2(|p|+|q|)$. The solution of the equations $i(g_2,g_2)=1$ and $i(g_1,g_2)=2$ are $\{p=0, q=\pm1\}$ and $\{p=\pm1, q=0\}$. 

We check that $\gm_1=\{a_1a_2^{-1}, b_1\eta^{\pm1}b_2\}$ and $\gm_2=\{a_1a_2^{-1}, b_1b_2\eta^{\pm1}\}$ are one-faced (here, $\gm_i$ is a union of two collection according on whether the power of $\eta$ is $1$ or $-1$ ). The rotation $R_1$ maps elements $\gm_1$ to elements of $\gm_2$. Finally, the collection $\{a_1a_2^{-1}, b_1b_2\eta\}$ is the mirror image of $\{a_1a_2,b_1b_2\eta^{-1}\}$.\vspace{0,5cm}

Hence, up to diffeomorphism, we have two one-faced collections with two curves (see Figure~\ref{twocurv}) namely $$\gm_1=\{a_1a_2^{-1}, b_1b_2\eta\}$$ and $$\gm_2=\{a_1,b_1b_2\eta a_2\}$$

\begin{figure}[htbp]
\begin{center}
\includegraphics[scale=0.2]{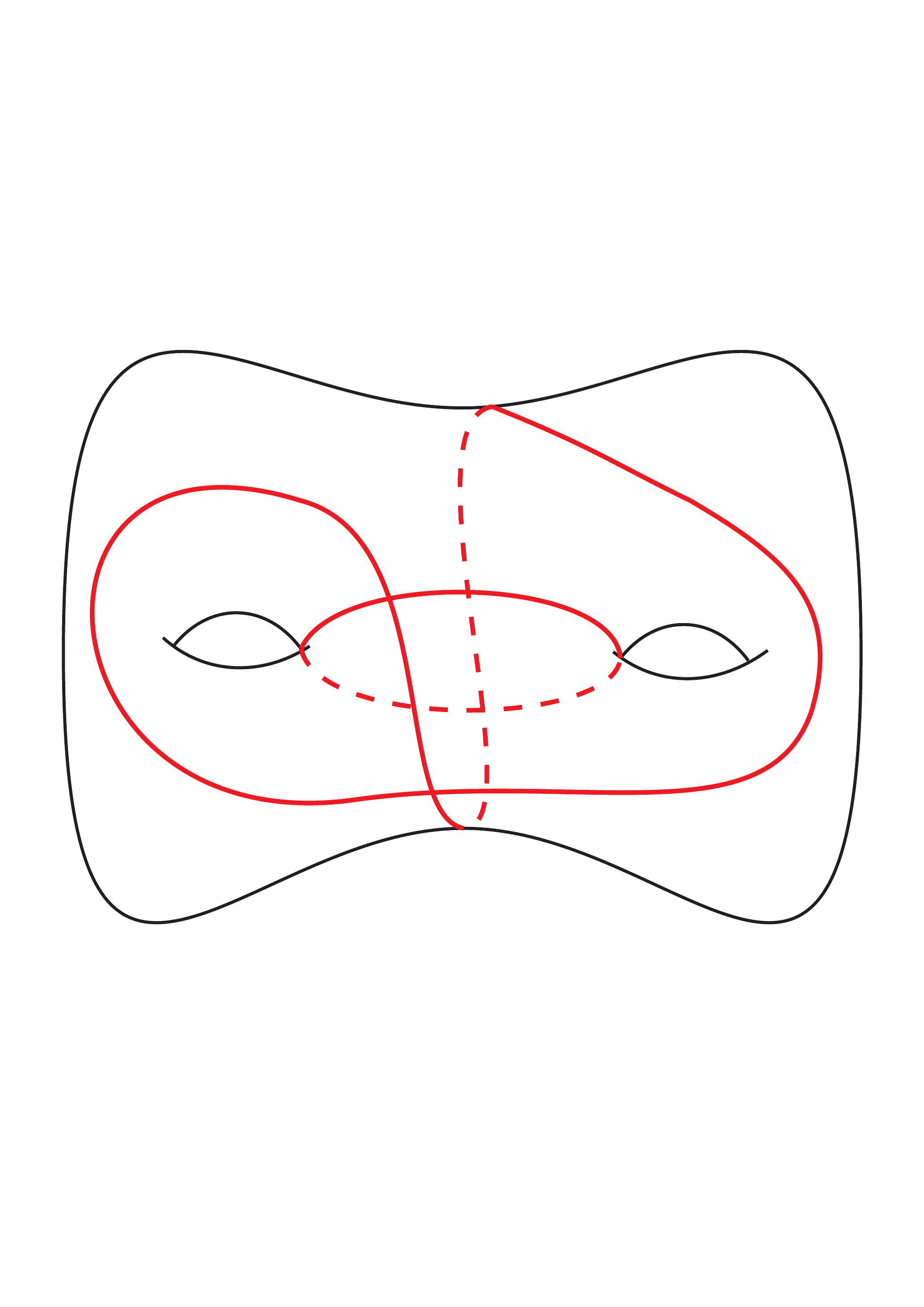}
\includegraphics[scale=0.2]{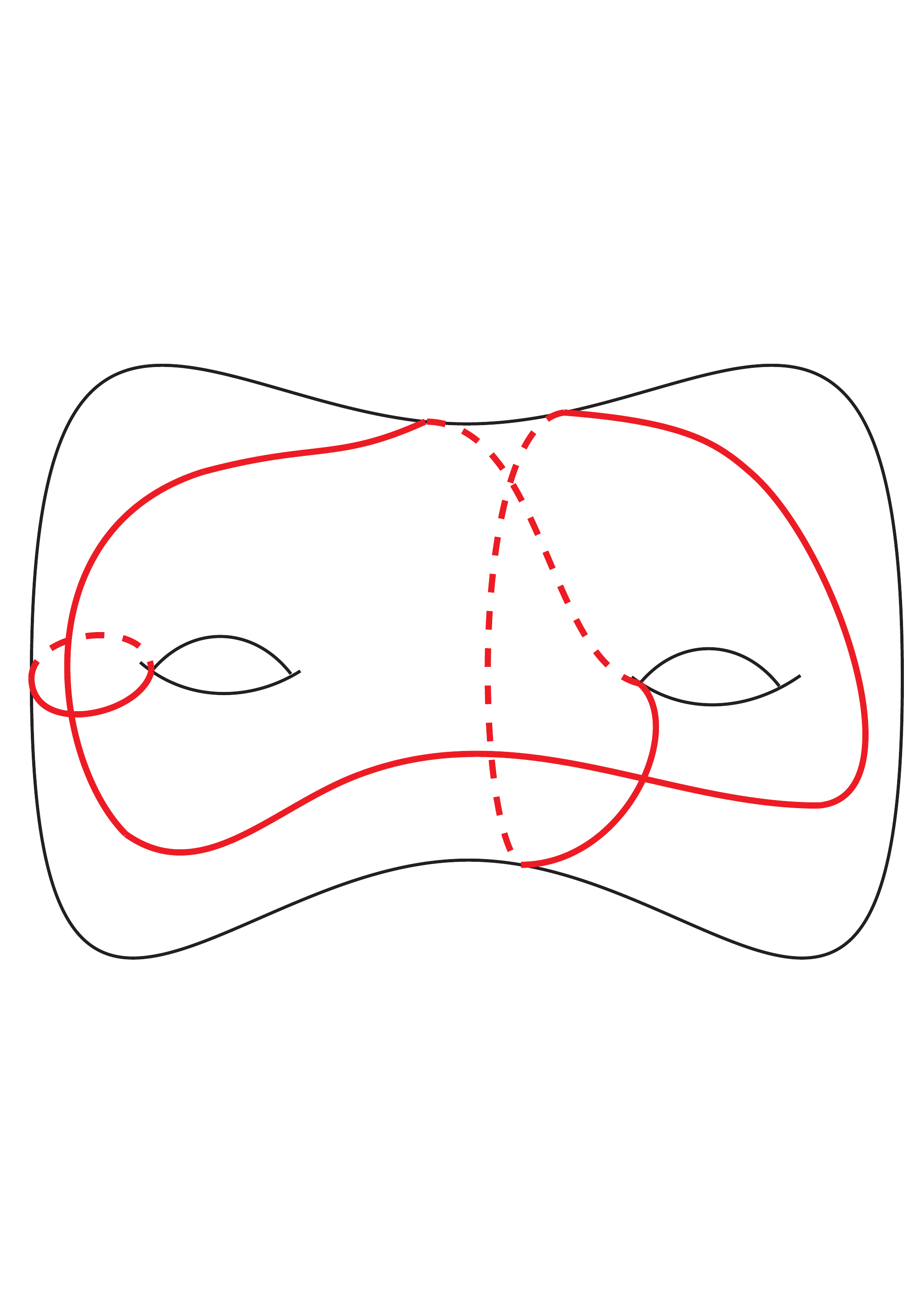}
\caption{One-faced collections with two curves}
\label{twocurv}
\end{center}
\end{figure}

\textbf{Case 3:} If $\gm$ has only one curve $g$, then up to diffeomorphism (interchangeability and rotations) $$g=a_1a_2^{-1}\eta^pb_1^{\varepsilon_1}\eta^qb_2^{\varepsilon_2}\eta^r$$ or $$g=a_1\eta^pb_1^{\varepsilon_1}a_2^{-1}\eta^qb_2^{\varepsilon_2}\eta^r,$$ where $\varepsilon_i=\pm1$. \vspace{1cm}

If $g=a_1\eta^pb_1^{\varepsilon_1}a_2^{-1}\eta^qb_2^{\varepsilon_2}\eta^r$, we check that $\gm$ is either not filling, either filling with more than one disk in its complement.\vspace{0,5cm} 

For $g(\varepsilon_1,\varepsilon_2)=a_1a_2^{-1}\eta^pb_1^{\varepsilon_1}\eta^qb_2^{\varepsilon_2}\eta^r$, $R_1$ sends $g(\varepsilon_1,\varepsilon_2)$ to $g(-\varepsilon_1,-\varepsilon_2)$.\\ If we start with $g=a_1a_2^{-1}\eta^pb_1^{-1}\eta^qb_2\eta^r$ and we change $a_1$ to $b_1$ by a diffeomorphism (that diffeomorphism will map $b_1$ to $a_1^{-1}$), $g$ gets mapped to $$g'=b_1a_2^{-1}\eta^pa_1\eta^qb_2\eta^r.$$ Now, if we reverse the orientation of $g'$ starting at $a_1$, we have $$g'=a_1^{-1}\eta^pa_2b_1^{-1}\eta^rb_2^{-1}\eta^q,$$ and $$R_2(g')=a_1\eta^pa_2^{-1}b_1\eta^rb_2\eta^q.$$ 

Finally, $\tau_{\eta^{-p}}\circ R_2(g')=a_1a_2^{-1}\eta^pb_1\eta^qb_2\eta^r.$\\
Hence, up to diffeomorphism, one can look at the case where $$\varepsilon_1=1;  \varepsilon_2=-1$$

In this case we have $$i(\gm,\gm)=|p|+|q|+|r|+|p+q+1|+|p-r|+|q+r+1|.$$

The equation $i(\gm,\gm)=3$ has two solutions $$\{p=0, q=0, r=-1\}$$ and $$\{p=-1, q=0, r=0\}.$$ The collections $\gm_1=\{a_1a_2^{-1}b_1b_2^{-1}\eta^{-1}\}$ and $\gm_2=\{a_1a_2^{-1}\eta^{-1}b_1b_2\}$ are one-faced. Moreover $R_1(\gm_1)=\gm_2$. Therefore, up to diffeomorphism, we have one one-faced collection with one curve (See Figure~\ref{onecurve}), namely $$\gm=\{a_1a_2^{-1}b_1b_2^{-1}\eta\}.$$ 

\begin{figure}[h!]
\begin{center}
\includegraphics[scale=0.2]{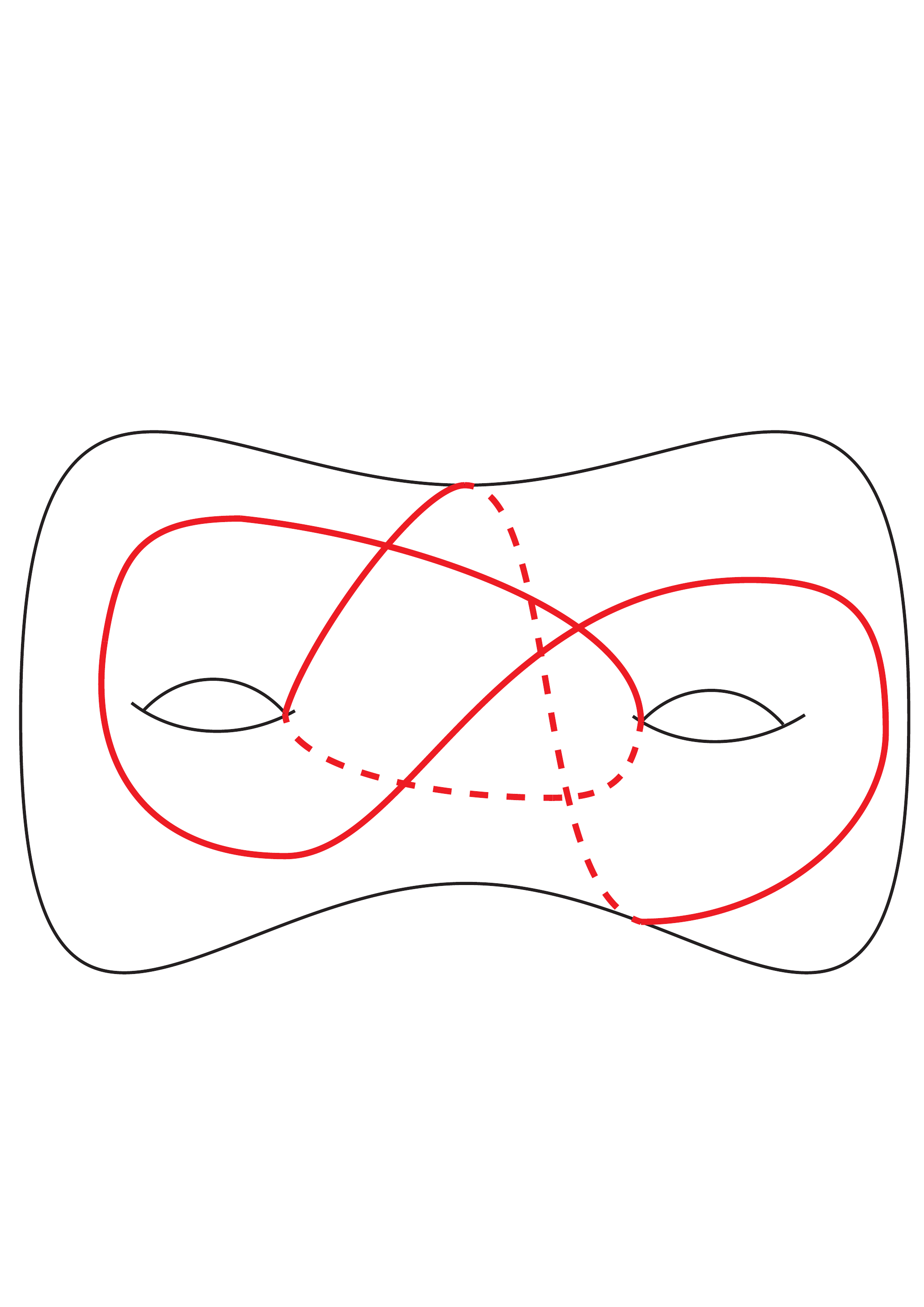}
\caption{One-faced collection made of one curve}
\label{onecurve}
\end{center}
\end{figure}
\end{proof}
\end{paragraph}
\begin{proof}[\textbf{Proof of Theorem \ref{thmprinc}}]
By Lemma~\ref{mincollect}, we can restrict our attention to minimal collections. By Corollary~\ref{incube}, if $P\in\mathcal{P}_8$ is the dual unit ball associated to a collection $\gm$, then $\gm$ is one-faced. Otherwise, if  $\gm$ had more than one face, it would have been possible to reduced $\gm$ to a filling collection $\gm'$ such that ~$N_{\gm'}\leq N_{\gm}$ that is $B_{N'^*_{\gm}}$ has less than eight vectors with non empty interior, which is impossible. 

It follows that $\gm$ is one of the collection in Theorem~\ref{countfin}. We check that dual unit balls of those collections are not in $\mathcal{P}_8$ (see bellow for their dual unit balls); which finally proves that elements of $\mathcal{P}_8$ are not realizable. 

\begin{paragraph}{Computation of dual unit balls:}
We compute the dual unit ball of an intersection norm by evaluating all Eulerian co-orientations on the canonical homology basis \cite{Direc}. Doing so, we obtain the vertices (of the dual unit balls) below: 
\begin{align*}
\{a_1,a_2,b_1b_2^{-1}\}&\mapsto [-1,1]^{4}\\
\{a_1a_2^{-1}, b_1b_2\eta\}&\mapsto\{\pm(1,1,1,-1); \pm(1,-1,1,1);\pm(1,1,1,1);\pm(1,1,-1,-1);\\
                                             &\hspace{1cm}\pm(1,-1,-1,1)\}\\
\{a_1, b_1b_2\eta a_2\}&\mapsto \{\pm(1,1,1,-1); \pm(1,-1,1,-1);\pm(1,1,-1,-1);\pm(-1,1,1,1);\\
                                             &\hspace{1cm}\pm(-1,1,-1,1)\}\\
\{a_1a_2^{-1}b_1^{-1}b_2\eta\}&\mapsto\{\pm(1,1,-1,-1); \pm(1,-1,-1,1);\pm(1,-1,1,1);\pm(1,1,1,1);\\
                                             &\hspace{1cm}\pm(-1,1,1,1)\}.
\end{align*} 
\end{paragraph}
The first collection has the whole unit cube as dual unit ball; the others three have dual unit balls with ten vectors.    
\end{proof}
\end{section}

UnitÈ de MathÈmatiques Pures et AppliquÈes (UMPA), ENS-Lyon.\\
\textit{E-mail address}: abdoul-karim.sane@ens-lyon.fr

\begin{thebibliography}{ll}
\addcontentsline{toc}{chapter}{Bibliographie}

\bibitem{Aoug}
Aougab Tarik  and Huang Shinnyih, \emph{Minimally intersecting filling pairs on surfaces}, arXiv:1312.0913.

\bibitem{Chap}
Chapuy Guillaume, \emph{A new combinatorial identity for unicellular maps, via a direct bijective approach.} Advances in Applied Mathematics, 47(4):874-893, 2011.

\bibitem{Direc}
Cossarini Marcos  and Dehornoy Pierre , \emph{Intersection norms on surfaces and Birkhoff sections for geodesic flows}, arXiv:1604.06688.

\bibitem{flp}
Fathi Albert, Laudenbach FranÁois, and PoÈnaru Valentin: \emph{Thurston's work on surfaces}, Mathematical notes, vol. 48, Princeton University Press, Princeton, NJ, 2012.

\bibitem{lassal}
de la Salle Mikael: \emph{Another proof of Thurston's theorem for norms taking integer values on the integer lattice}, C. R. Math. Acad. Sci. Paris \textbf{354} (2016), no. 6, 611-613. 

\bibitem{HasScott}
 Hass Joel and Scott Peter: 
\emph{Shortening curves on surfaces,} Topology (1994), 1-19.

\bibitem{thurst}
Thurston William: 
\emph{A norm for the homology of three manifolds}, \textit{Mem. Amer. Math. Soc. \textbf{339}}(1986), 99-130.

\bibitem{turaev}
 Turaev Vladmir: 
\emph{A norm for the cohomology of $2$-complexes}, \textit{Algebr. Geom. Topol. 2} (2002), 137-155.
\end{thebibliography}
 \end{document}